\theoremstyle{definition}
\newtheorem{remark}{Remark}
\newtheorem{theorem}{Theorem}
\def\TIKZcoresize{14mm}
\def\TIKZlspan{8mm}
\def\TIKZhspan{7mm}
\let\eps=\varepsilon
\let\le=\leqslant
\let\leq=\leqslant
\let\geq=\geqslant
\def\A{A} \def\x{x} \def\z{z} \def\s{s} \def\t{t} \def\y{y} \def\c{c}
\def\i{\mathbf{i}} \def\r{\mathbf{r}}
\def\T{\mathcal{T}}
\def\P{\mathcal{P}}
\def\X{\mathcal{X}} \def\Y{\mathcal{Y}} \def\Z{\mathcal{Z}} \def\G{\mathcal G}
\def\O{\mathcal{O}}
\def\H{\mathtt{H}}
\def\trace{\mathop{\mathrm{Tr}}\nolimits}
\def\rank{\mathop{\mathrm{rank}}\nolimits}
\def\als{\mathop{\mathrm{ALS}}\nolimits}
\def\cond{\mathop{\mathrm{cond}}\nolimits}
\def\grad{\mathop{\mathrm{grad}}\nolimits}
\def\const{\mathop{\mathrm{const}}\nolimits}
\def\Span{\mathop{\mathrm{span}}\nolimits}
\def\trans{*}
\def\lmax{\lambda_{\mathrm{max}}}
\def\lmin{\lambda_{\mathrm{min}}}
\def\dz{\delta{z}}
 \def\C{\mathbb{C}}
\def\new{\mathrm{new}}
\begin{document}
\author{Sergey V. Dolgov\thanks{Max-Planck-Institut f{\"u}r Mathematik in den Naturwissenschaften, Inselstr. 22-26, D-04103 Leipzig, Germany ({\tt sergey.v.dolgov@gmail.com}).}~
  and Dmitry V. Savostyanov\thanks{University of Southampton, Department of Chemistry, Highfield Campus, Southampton SO17 1BJ, United Kingdom ({\tt dmitry.savostyanov@gmail.com})}}
\title{Alternating minimal energy methods for linear systems in higher dimensions. Part I: SPD systems\thanks{Partially supported by
         RFBR grants  12-01-00546-a, 11-01-12137-ofi-m-2011, 11-01-00549-a, 12-01-33013, 12-01-31056,
         Russian Fed. Gov. contracts No. $\Pi 1112$, 14.740.11.0345, 16.740.12.0727 at Institute of Numerical Mathematics, Russian Academy of Sciences,
         and EPSRC grant EP/H003789/1  at the University of Southampton.
         }}
\date{January 25, 2013}
\maketitle

\begin{abstract}
We introduce a family of numerical algorithms for the solution of linear system in higher dimensions with the matrix and right hand side given and the solution sought in the tensor train format.
The proposed methods are rank--adaptive and follow the alternating directions framework, but in contrast to ALS methods, in each iteration a tensor subspace is enlarged by a set of vectors chosen similarly to the steepest descent algorithm.
The convergence is analysed in the presence of   approximation errors and the geometrical convergence rate is estimated and related to the one of the steepest descent.
The complexity of the presented algorithms is linear in the mode size and dimension and the convergence demonstrated in the numerical experiments is comparable to the one of the DMRG--type algorithm.

{\it Keywords:} high--dimensional problems, tensor train format, ALS, DMRG, steepest descent, convergence rate, superfast algorithms.
\end{abstract}

%%%%%%%%%%%%%%%%%%%% INTRO

\section{Introduction}
Linear systems arising from high--dimensional problems usually can not be solved by standard numerical algorithms.
If the equation is considered in $d$ dimensions on a $n_1 \times n_2 \times \ldots \times n_d$ grid, the number of unknowns $n_1 \ldots n_d$ scales exponentially with $d,$ and even for moderate dimension $d$ and mode sizes $n_k$ the numerical complexity lays far beyond the technical possibilities of modern workstations and parallel systems.
To make the problem tractable, different approximations are proposed, including sparse grids~\cite{smolyak-1963,griebel-sparsegrids-2004} and tensor product methods~\cite{kolda-review-2009,khor-qtt-2011,khor-survey-2011,hackbusch-2012}.
In this paper we consider the linear system
$%\begin{equation}\nonumber
\A \x = \y,
$ %\label{eqn:linsystem} \end{equation}
where the matrix $\A$ and right-hand-side $\y$ are given and approximate solution $\x$ is sought in the \emph{tensor train} (TT) format.
Methods based on the TT format, also known as a \emph{linear tensor network}, are novel and particularly interesting among all tensor product methods due to their robustness and simplicity.

The numerical optimization on tensor networks was first considered in quantum physics community by S.~White~\cite{white-dmrg-1993}, who introduces the \emph{matrix product states} (MPS) formalism to represent the ground state of a spin system together with the \emph{density matrix renormalization group} (DMRG) optimization scheme.
The tensor train format and some computational methods were independently re-discovered in the papers of Oseledets and Tyrtyshnikov (see \cite{osel-tt-2011} and references therein) until the results of White~et.~al. were popularized in the numerical mathematics community by R.~Schneider~\cite{holtz-ALS-DMRG-2012}.
The questions concerning the convergence properties of alternating schemes for different tensor product formats were immediately raised and studied.
The experimental results from quantum physics show the notably fast convergence of DMRG for the ground state problem, i.e., finding the minimal eigenstate of a system, but give no theoretical justification for this observation.
The \emph{alternating least squares} (ALS) algorithm was used in multilinear analysis for the computation of \emph{canonical} tensor decomposition since early results of Hitchcock~\cite{hitchcock-sum-1927} and was known for its monotone but very slow convergence.
For ALS there is also a lack of convergence estimates both in the classical papers~\cite{harshman-parafac-1970,cc-parafac-1970},
and in the recent ones, where ALS was applied to the Tucker model~\cite{mpi-chem-2007,ost-latensor-2009},
tensor trains~\cite{Os-mvk2-2011},
hierarchical Tucker format~\cite{tobler-ht_dmrg-2011}
and high--dimensional interpolation~\cite{ot-ttcross-2010}.

In recent papers by Uschmajew~\cite{ushmaev-cp-2012,ushmaev-tt-2013} the local convergence of ALS is proven for the canonical and tensor train decompositions.
This is a major theoretical breakthrough, which unfortunately does not immediately lead to practical algorithms due to the local character of convergence studied, unjustified assumptions on the structure of the Hessian, and very strong requirements on the accuracy of the initial guess.
The convergence rate of ALS is difficult to estimate partly due to the complex geometrical structure of manifolds defined by tensor networks.
This problem is now approached from several directions, and we might expect new results soon~\cite{falconouy-2012,espig-als-2013}.

In contrast to ALS schemes which operate on manifolds of fixed dimension, the DMRG algorithm changes the ranks of a tensor format.
This allows to choose the ranks adaptively to the desired error threshold or the accuracy of the result and develop more practical algorithms which do not rely on a priori choice of ranks.
The DMRG was adopted for novel tensor formats (see references above) and new problems, including adaptive high--dimensional interpolation~\cite{so-dmrgi-2011proc} and solution of linear systems~\cite{DoOs-dmrg-solve-2011,holtz-ALS-DMRG-2012}.
The geometrical analysis, eg the convergence of the nonlinear Gauss--Seidel method, is however even more difficult when the dimensions of underlying manifolds are not fixed.

Apart of working with the tensor format structure directly, like ALS and DMRG do, standard algorithms from numerical linear algebra can be applied with tensor approximations and other tensor arithmetics.
Following this paradigm, the solution of linear problems in tensor product formats was addressed in \cite{sav-2006,balgras-Htuck_gmres-2013,dc-tt_gmres-2011}.
The usual considerations of linear algebra can be used in this case to analyze the convergence.
A first notable example is the method of conjugate--gradient type for the Rayleigh quotient minimization in higher dimensions, for which the global convergence was proven by O.~Lebedeva~\cite{lebedeva-tensornd-2011}.

We develop a framework which combines the ALS optimization steps (ranks are fixed, convergence estimates not yet possible) with the steps when the tensor subspaces are increased and the ranks of a tensor format grow.
Choosing the new vectors in accordance with standard linear algebra algorithms, we recast the classical convergence estimates for the proposed algorithm in higher dimensions.
In this paper we consider the case of symmetrical positive definite (SPD) matrices and analyze the convergence in the $A$--norm, i.e. minimize the  \emph{energy function}.
The \emph{basis enrichment} choice follows the steepest descent (SD) algorithm and the convergence of the resulted method is analyzed with respect to the one of steepest descent.
We show that the basis enrichment step combined with the ALS step can be seen as a certain computationally cheap approximation of the DMRG step.
The complexity of the resulted method is equal to the one of ALS and is linear in the mode size and dimension.
Our choice of the basis enrichment appears to be very good for practical computations, and for the considered numerical examples the proposed methods converge almost as fast as the DMRG algorithm.

Summarizing the above, the proposed algorithms have (1) proven geometrical convergence with the estimated rate, (2) practical convergence compared to the one of DMRG, (3) numerical complexity compared to the one of ALS.

\vskip 5mm

The paper is organized as follows.

In Section~\ref{sec:tt} we introduce the tensor train notation and necessary definitions.

In Section~\ref{sec:als} we introduce the basic notation for ALS and DMRG schemes. We also study how the modification of one TT--block affects the ALS problem for its neighbor and describe this in terms of the Galerkin correction method.

In Section~\ref{sec:sd} we develop the family of steepest descent methods for the problems in one, two and many dimensions.
The proposed methods have an inner--outer structure, i.e., a steepest descent step in $d$ dimensions is followed by a steepest descent step in $d-1$ dimension, etc, cf. the interpolation algorithms~\cite{ost-tucker-2008,gos-kryl-2012}.
The convergence of the recursive algorithms in higher dimensions is analyzed using the Galerkin correction framework.
The effect of roundoff/approximation errors is also studied.

Since we make no assumptions on the TT--ranks of the solution, the ranks of the vectors in the proposed algorithms can grow at each iteration and make the algorithm inefficient.
In Section~\ref{sec:prac} we discuss the implementation details, in particular the steps when the tensor approximation is required to reduce the ranks.

In Section~\ref{sec:num} the model numerical experiments demonstrate the efficiency of the method proposed and compare it with other algorithms mentioned in the paper.

%The solution problem considered in this paper can also solve an approximation problem by letting $A=I.$

%\newpage %%%%%%%%%%%%%%%%%%%%
\section{Tensor train notation and definitions} \label{sec:tt}
The tensor train (TT) representation of a $d$-dimensional tensor $\x=[x(i_1,\ldots,i_d)]$ is written as the following multilinear map (cf.~\cite{ushmaev-tt-2013})
\begin{equation}\label{eq:tt}
 \begin{split}
  \x = \tau(\bar X) & = \tau(X^{(1)},\ldots,X^{(d)}), \\
  x(i_1,\ldots,i_d) & = X^{(1)}_{\alpha_0,\alpha_1}(i_1) X^{(2)}_{\alpha_1,\alpha_2}(i_2) \ldots X^{(d-1)}_{\alpha_{d-2},\alpha_{d-1}}(i_{d-1}) X^{(d)}_{\alpha_{d-1},\alpha_d}(i_d),
 \end{split}
\end{equation}
where $i_k=1,\ldots,n_k$ are the \emph{mode} (physical) indices, $\alpha_k=1,\ldots,r_k$ are the \emph{rank} indices, $X^{(k)}$ are the tensor train \emph{cores} (TT--cores) and $\bar X = (X^{(1)}, \ldots, X^{(d)})$ denote the whole tensor train.
Here and later we use the Einstein summation convention~\cite{Einstein-relativitat-1916}, which assumes a summation over every pair of repeated indices.
Therefore, in Eq.~\eqref{eq:tt} we assume the summation over all rank indices $\alpha_k,$ $k=1,\ldots,d-1.$
We also imply the \emph{closed boundary conditions}  $r_0=r_d=1$ to make the right--hand side a scalar for each $(i_1,\ldots,i_d).$
Eq.~\eqref{eq:tt} is written in the elementwise form, i.e.,  the equation is assumed over all free (unpaired) indices.
It is often convenient in higher dimensions and will be used throughout the paper.

The indices can be written either in the subscript $x_j$ or in brackets $x(j)$.
For the summation, there is no difference.
The subscripted indices are usually considered as \emph{row and column} indices of a matrix, while the indices in brackets are seen as \emph{parameters}.
For example, each TT--core $X^{(k)}$ is considered as a parameter-dependent on $i_k$ matrix with the row index $\alpha_{k-1}$ and the column index $\alpha_k$ as follows
$$
X^{(k)} = [X^{(k)}_{\alpha_{k-1},\alpha_k}(i_k)] \in \C^{r_{k-1}\times n_k \times r_k}, \qquad X^{(k)}(i_k) \in \C^{r_{k-1}\times r_k}.
$$
In our notation $X^{(k)}(i_k)$ is a matrix, for which standard algorithms like orthogonalization (QR) and singular value decomposition (SVD) can be applied.
We will freely transfer indices from subscripts to brackets in order to make the equations easier to read or to emphasize a certain transposition of elements in tensors.
It brings the notations in consistence with previous papers on the numerical tensor methods, e.g. \cite{holtz-ALS-DMRG-2012,DoOs-dmrg-solve-2011,dk-qtt-tucker-2012pre,ushmaev-tt-2013} and others.

We will reshape arrays into matrices and vectors by using the \emph{index grouping}, i.e., combining two or more indices $\alpha,\ldots,\zeta$ in a single multi-index $\overline{\alpha \ldots \zeta}.$
%For example, $X_{\alpha,\overline{i \beta}}$ denotes the matrix with the same elements as in $X$, but the row index running through the range of $\alpha$, and the column index running over all values of $i$ and $\beta$.
Following~\cite{ushmaev-tt-2013} we define \emph{interface} matrices $X^{\leq k}\in\C^{n_1\ldots n_k \times r_k}$ and $X^{> k}\in\C^{r_k \times n_{k+1}\ldots n_d}$ as follows
\begin{equation}\label{eq:iface}
 \begin{split}
  X^{\leq k}(\overline{i_1 i_2 \ldots i_k}, \alpha_k) & = X^{(1)}_{\alpha_1}(i_1) X^{(2)}_{\alpha_1\alpha_2}(i_2) \ldots X^{(k)}_{\alpha_{k-1},\alpha_k}(i_k), \\
  X^{> k}(\alpha_k,\overline{i_{k+1}\ldots i_{d-1}i_d}) & = X^{(k+1)}_{\alpha_k,\alpha_{k+1}}(i_{k+1}) \ldots X^{(d-1)}_{\alpha_{d-2},\alpha_{d-1}}(i_{d-1}) X^{(d)}_{\alpha_{d-1}}(i_d),
 \end{split}
\end{equation}
and similarly for symbols $X^{<k}$ and $X^{\geq k}.$
Using the $\tau$ notation defined in~\eqref{eq:tt} we can write $x = \tau(X^{\leq k},X^{> k}).$
% Moreover, $X^{\leq k} = \tau(X^{(1)},\ldots,X^{(k)})$ and $X^{>k} = \tau(X^{(k+1)},\ldots,X^{(d)}).$
For a tensor $\x=[x(i_1,\ldots,x_d)]$ we also define the \emph{unfolding matrix}, which consists of the entries of the original tensor as follows
$$
X^{\{k\}}(\overline{i_1\ldots i_k}, \overline{i_{k+1}\ldots i_d}) = x(i_1,\ldots,i_d),
\qquad
X^{\{k\}}\in \C^{n_1\ldots n_k \times n_{k+1}\ldots n_d}.
$$

For $\x$ in the TT--format~\eqref{eq:tt} it holds $X^{\{k\}} = X^{\leq k} X^{> k}$ and therefore $\rank X^{\{k\}}=r_k.$
In~\cite{osel-tt-2011} the reverse is proven: for any tensor $\x$ there exists the representation~\eqref{eq:tt} with TT--ranks $r_k=\rank X^{\{k\}}.$
This gives the term \emph{TT--rank} the definite algebraic meaning.
As a result, the tensor train representation of fixed TT--ranks yields a closed manifold, and the rank-$(r_1,...,r_{d-1})$ approximation problem is well--posed.
We can also approximate a given tensor by a tensor train with quasi--optimal ranks using a simple and robust approximation (rank truncation, or \emph{tensor rounding}) algorithm~\cite{osel-tt-2011}.
This is the case for all tensor networks without cycles, eg. Tucker~\cite{Tucker}, HT~\cite{hk-ht-2009}, QTT-Tucker~\cite{dk-qtt-tucker-2012pre}, etc.
In contrast, the MPS formalism originally assumes the \emph{periodic boundary conditions} $\alpha_0=\alpha_d$ and sum over these indices, which leads to $\trace(X^{(1)}\ldots X^{(d)}),$ where all matrices can be shifted in cycle under the trace.
The optimization in such type of \emph{tensor networks} is difficult, because they form unclosed manifolds and the best approximation does not always exist.

The tensor train representation of the matrix is made similarly with the TT--cores depending on two parameters $i_k,j_k.$
Hence, $\x=\tau(\bar X)$ is sought in the form~\eqref{eq:tt} and $\A$ and $\y$ given in the TT--format as follows
\begin{equation}\label{eq:matt}
 \begin{split}
  A(i_1,\ldots,i_d;\: j_1,\ldots,j_d) & = A^{(1)}(i_1,j_1) \ldots A^{(d)}(i_d,j_d), \\
  y(i_1,\ldots,i_d) & = Y^{(1)}(i_1) \ldots Y^{(d)}(i_d).
\end{split}
\end{equation}

For $\A$ and $\x$ given in the TT--format, the matrix-vector product $\c = \A \x$ is also a TT--format computed as follows
\begin{equation}\nonumber
 c(i_1,\ldots,i_d) = \left(A^{(1)}(i_1,j_1) \otimes X^{(1)}(j_1)\right) \ldots \left(A^{(d)}(i_d,j_d) \otimes X^{(d)}(j_d)\right),
\end{equation}
where $\otimes$ denotes the tensor (Kronecker) product of two matrices defined as follows
\begin{equation}\nonumber
 A=\begin{bmatrix} A(i,j) \end{bmatrix}, \quad
 B=\begin{bmatrix} B(p,q) \end{bmatrix}, \qquad
 C = A\otimes B = \begin{bmatrix} C(\overline{ip},\overline{jq}) \end{bmatrix}
                = \begin{bmatrix} A(i,j) B(p,q) \end{bmatrix}.
\end{equation}
We refer to \cite{osel-tt-2011} for more details on basic tensor operations in the TT--format.

In this paper we will use standard $l_2$ scalar product $(\cdot, \cdot)$ and the $A$--scalar product $(\cdot, \cdot)_A$ defined by a symmetrical positive definite (SPD) matrix $A$ as follows
\begin{equation}\nonumber
 (u,v)_A = u^\trans A v, \qquad \|u\|_A^2 = (u,u)_A.
\end{equation}
For a given nonsingular matrix $U$ we define the $A$--orthogonal projector $R_U$ as follows: for all $v$ and all $w\in\Span U$ it holds
\begin{equation}\nonumber
 %\begin{array}{ll}
 % P_U v  \in \Span U, \qquad (w,P_U v) = (w,v),     \qquad & P_U = U (U^\trans U)^{-1} U^\trans\\
  R_U v  \in \Span U, \qquad (w,R_U v)_A = (w,v)_A, \qquad  R_U = U (U^\trans A U)^{-1} U^\trans A.
 %\end{array}
\end{equation}

We will use vector notations for mode indices $\i=(i_1,\ldots,i_d)$ and rank indices $\r=(r_0,\ldots,r_d).$
We also denote the subspace of tensor trains $\bar X=(X^{(1)},\ldots,X^{(d)})$ with tensor ranks $\r$ as
$$
\T_{\r}  = \mathop{\times}\limits_{i=1}^d \C^{r_{i-1} \times n_i \times r_i}.
$$

%\newpage  %%%%%%%%%%%%%%%%%%%%%%%%%%%%%%%%%%%%%%
\section{Alternating minimization methods}\label{sec:als}
\subsection{ALS--like minimization with fixed TT--ranks}
The MPS formalism was proposed in Quantum Physics, where the representation~\eqref{eq:tt} was used for the minimization of the Rayleigh quotient $(\x,\A\x)/(\x,\x).$
Similarly, the solution of a linear system $\A \x = \y$  with $\A=\A^*$ can be sought through the minimization of an \emph{energy function}
\begin{equation}\label{eq:energy}
 J(\x) = \|\x_* - \x \|_A^2 = (\x,\A\x) - 2\Re(\x,\y) + \const,
\end{equation}
where $\x_*$ denotes the exact solution.
We consider the Hermitian matrix $\A=\A^*$ and the right-hand side $\y$ given in the TT--format~\eqref{eq:matt}, and solve the minimization problem with $\x$ sought in the TT--format~\eqref{eq:tt} with fixed TT--ranks $\r,$ i.e.,
$%\begin{equation}\nonumber
  \bar X_* = \arg\min_{\bar X\in\T_\r}J(\tau(\bar X)).
$%\end{equation}
This heavy nonlinear minimization problem can hardly be solved unless a (very) accurate initial guess is available (see, eg.~\cite{ushmaev-tt-2013}).
To make it tractable, we can use the alternating linear optimization framework and substitute the global minimization over the tensor train $\bar X \in \T_\r$ by the linear minimization over all cores $X^{(1)}, \ldots, X^{(d)}$ subsequently in a cycle.
Solving the \emph{local} problem we assume that all cores but $k$--th of the current tensor train $\bar X = (X^{(1)},\ldots,X^{(d)})$ are `frozen', and the minimization is done over $X^{(k)}$ as follows
\begin{equation}\label{eq:emin1}
 \begin{split}
  \bar X_\new  & = (X^{(1)},\ldots,X^{(k-1)},X^{(k)}_\new,X^{(k+1)},\ldots,X^{(d)}), \qquad \mbox{where} \\
  \quad X^{(k)}_{\new} & = \arg\min_{X^{(k)}} J(\tau(\bar X)), \qquad \mbox{s.t.} \quad X^{(k)}\in\C^{r_{k-1}\times n_k \times r_k}.
 \end{split}
\end{equation}
Clearly, the energy function does not grow during the sequence of ALS updates and the solution will converge to a local minimum.

To write each ALS step as a linear problem, let us stretch all entries of the TT--core $X^{(k)}$ in the vector
$
x_k(\overline{\alpha_{k-1} i_k \alpha_k}) = X^{(k)}_{\alpha_{k-1},\alpha_k}(i_k).
$
From~\eqref{eq:tt} we see that $\x = \X_{\neq k} x_k,$ where $\X_{\neq k}=\P_{\neq k}(\bar X)$ is the $n_1\ldots n_d \times r_{k-1} n_k r_k$ matrix defined as follows
\begin{equation}\label{eq:proj}
 \begin{split}
 \X_{\neq k}(\overline{i_1\ldots i_d}, \overline{\alpha_{k-1} j_k \alpha_k}) & =  X^{(1)}_{\alpha_1}(i_1) \ldots X^{(k-1)}_{\alpha_{k-2},\alpha_{k-1}}(i_{k-1}) \delta(i_k,j_k)X^{(k+1)}_{\alpha_{k},\alpha_{k+1}}(i_{k+1}) \ldots X^{(d)}_{\alpha_{d-1}}(i_d), \\
 \X_{\neq k} = \P_{\neq k}(\bar X)  & = X^{<k} \otimes I_{n_k} \otimes \left(X^{>k} \right)^\top,
 \end{split}
\end{equation}
where $\delta(i,j)$ is the Kronecker symbol, i.e., $\delta(i,j)=1$ if $i=j$ and $\delta(i,j)=0$ elsewhere.
If $J(\tau(\bar X))$ is considered as a function of $x_k,$ it is also the second-order energy function
$$
J(\tau(\x)) = (\A \X_{\neq k} x_k, \X_{\neq k} x_k) - 2 (\y, \X_{\neq k} x_k) = (\X_{\neq k}^\trans \A \X_{\neq k} x_k, x_k) - 2 (\X_{\neq k}^\trans \y, x_k),
$$
where the gradient w.r.t. $x_k$ is zero when%
\footnote{For illustration see  Fig.~\ref{fig:xax}, for the detailed derivation see~\cite{holtz-ALS-DMRG-2012,DoOs-dmrg-solve-2011}.}
\begin{equation} \label{eq:reduced}
 \left( \X_{\neq k}^\trans \A \X_{\neq k} \right) x_k = \X_{\neq k}^\trans \y.
\end{equation}
The solution of the local minimization problem~\eqref{eq:emin1} is therefore equivalent to the solution of the original system $\A\x=\y$ in the \emph{reduced basis} $\X_{\neq k}=\P_{\neq k}(\bar X),$ defined by~\eqref{eq:proj}.
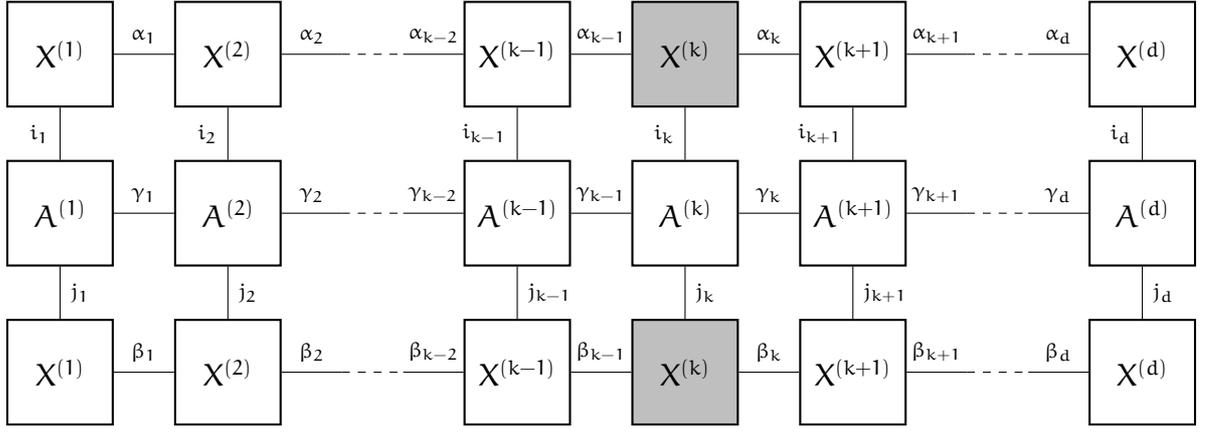
\begin{figure}[t]
 \begin{center}
  \resizebox{.98\textwidth}{!}{ \def\ss{\scriptsize}
 \def\st{\scriptstyle}
 \def\L{\TIKZlspan}
 \def\H{\TIKZhspan}
 \begin{tikzpicture}[x=10mm,y=-10mm]
   \node[core ] (a5)                   {$A^{(k)}$};    
   \node[corex] (x5) [above=\H of a5]  {$X^{(k)}$};
   \node[corex] (y5) [below=\H of a5]  {$X^{(k)}$};
   
   \node[core] (a6) [right=\L of a5]  {$A^{(k+1)}$};
   \node[void] (a7) [right=\L of a6]  {};
   \node[core] (a8) [right=\L of a7]  {$A^{(d)}$};
   \node[core] (a4) [left =\L of a5]  {$A^{(k-1)}$};
   \node[void] (a3) [left =\L of a4]  {};
   \node[core] (a2) [left =\L of a3]  {$A^{(2)}$};
   \node[core] (a1) [left= \L of a2]  {$A^{(1)}$};
   
   \node[core] (x6) [right=\L of x5]  {$X^{(k+1)}$};
   \node[void] (x7) [right=\L of x6]  {};
   \node[core] (x8) [right=\L of x7]  {$X^{(d)}$};
   \node[core] (x4) [left =\L of x5]  {$X^{(k-1)}$};
   \node[void] (x3) [left =\L of x4]  {};
   \node[core] (x2) [left =\L of x3]  {$X^{(2)}$};
   \node[core] (x1) [left= \L of x2]  {$X^{(1)}$};
   
   \node[core] (y6) [right=\L of y5]  {$X^{(k+1)}$};
   \node[void] (y7) [right=\L of y6]  {};
   \node[core] (y8) [right=\L of y7]  {$X^{(d)}$};
   \node[core] (y4) [left =\L of y5]  {$X^{(k-1)}$};
   \node[void] (y3) [left =\L of y4]  {};
   \node[core] (y2) [left =\L of y3]  {$X^{(2)}$};
   \node[core] (y1) [left= \L of y2]  {$X^{(1)}$};

   \path[bond]        (a1) -- node[left,midway]  {${\st i_1}$}     (x1);
   \path[bond]        (a2) -- node[left,midway]  {${\st i_2}$}     (x2);
   \path[bond]        (a4) -- node[left,midway]  {${\st i_{k-1}}$} (x4);
   \path[bond]        (a5) -- node[left,midway]  {${\st i_k}$}     (x5);
   \path[bond]        (a6) -- node[left,midway]  {${\st i_{k+1}}$} (x6);
   \path[bond]        (a8) -- node[left,midway]  {${\st i_d}$}     (x8);
   
   \path[bond]        (a1) -- node[right,midway] {${\st j_1}$}     (y1);
   \path[bond]        (a2) -- node[right,midway] {${\st j_2}$}     (y2);
   \path[bond]        (a4) -- node[right,midway] {${\st j_{k-1}}$} (y4);
   \path[bond]        (a5) -- node[right,midway] {${\st j_k}$}     (y5);
   \path[bond]        (a6) -- node[right,midway] {${\st j_{k+1}}$} (y6);
   \path[bond]        (a8) -- node[right,midway] {${\st j_d}$}     (y8);
   
   \path[bond]        (a1)      -- node[above,midway]  {${\st\gamma_1}$}       (a2);
   \path[bond]        (a2)      -- node[above,midway]  {${\st\gamma_2}$}       (a3.west);
   \path[bond,dashed] (a3.west) --                                             (a3.east);
   \path[bond]        (a3.east) -- node[above,midway]  {${\st\gamma_{k-2}}$}   (a4);
   \path[bond]        (a4)      -- node[above,midway]  {${\st\gamma_{k-1}}$}   (a5);
   \path[bond]        (a5)      -- node[above,midway]  {${\st\gamma_{k}}$}     (a6);
   \path[bond]        (a6)      -- node[above,midway]  {${\st\gamma_{k+1}}$}   (a7.west);
   \path[bond,dashed] (a7.west) --                                             (a7.east);
   \path[bond]        (a7.east) -- node[above,midway]  {${\st\gamma_d}$}       (a8);
   
   \path[bond]        (x1)      -- node[above,midway]  {${\st\alpha_1}$}       (x2);
   \path[bond]        (x2)      -- node[above,midway]  {${\st\alpha_2}$}       (x3.west);
   \path[bond,dashed] (x3.west) --                                             (x3.east);
   \path[bond]        (x3.east) -- node[above,midway]  {${\st\alpha_{k-2}}$}   (x4);
   \path[bond]        (x4)      -- node[above,midway]  {${\st\alpha_{k-1}}$}   (x5);
   \path[bond]        (x5)      -- node[above,midway]  {${\st\alpha_{k}}$}     (x6);
   \path[bond]        (x6)      -- node[above,midway]  {${\st\alpha_{k+1}}$}   (x7.west);
   \path[bond,dashed] (x7.west) --                                             (x7.east);
   \path[bond]        (x7.east) -- node[above,midway]  {${\st\alpha_d}$}       (x8);
   
   \path[bond]        (y1)      -- node[above,midway]  {${\st\beta_1}$}       (y2);
   \path[bond]        (y2)      -- node[above,midway]  {${\st\beta_2}$}       (y3.west);
   \path[bond,dashed] (y3.west) --                                            (y3.east);
   \path[bond]        (y3.east) -- node[above,midway]  {${\st\beta_{k-2}}$}   (y4);
   \path[bond]        (y4)      -- node[above,midway]  {${\st\beta_{k-1}}$}   (y5);
   \path[bond]        (y5)      -- node[above,midway]  {${\st\beta_{k}}$}     (y6);
   \path[bond]        (y6)      -- node[above,midway]  {${\st\beta_{k+1}}$}   (y7.west);
   \path[bond,dashed] (y7.west) --                                            (y7.east);
   \path[bond]        (y7.east) -- node[above,midway]  {${\st\beta_d}$}       (y8);
   
 \end{tikzpicture}
 }
 \end{center}
 \caption{Tensor network corresponding to the quadratic form $(\A\x,\x)$ with matrix $\A$ and vector $\x$ given in the tensor train format.
          The boxes are tensors with lines (legs) denoting indices. Each bond between two tensors assumes a summation over the join index.}
 \label{fig:xax}
\end{figure}

The tensor train representation~\eqref{eq:tt} is non-unique.
Indeed, two representations $\bar X$ and $\bar Y$ map to one tensor $\tau(\bar X)=\tau(\bar Y)$ as soon as
$$
Y^{(k)}(i_k)=H_{k-1}^{-1}X^{(k)}(i_k)H_k, \qquad k=1,\ldots,d,
$$
where $H_0=H_d=1$ and $H_k \in \C^{r_k\times r_k},$ $k=1,\ldots,d-1,$ are arbitrary nonsingular matrices.
Given a vector in the TT--format $\x=\tau(\bar X),$ any transformation $\H=(H_0,\ldots,H_d)$ does not change the energy level since $J(\tau(\bar X))=J(\tau(\bar Y))$ but gives us some flexibility for the choice of the reduced basis since $\P_{\neq k}(\bar X) \neq \P_{\neq k}(\bar Y).$
The proper choice of the \emph{representation} $\bar X$ essentially defines the reduced basis and affects the properties of the \emph{local problem}~\eqref{eq:reduced}.
%The basis $\X_{\neq k}$ is fully defined by~\eqref{eq:proj}, but nevertheless there is some flexibility in its choice which follows from the non-uniquiness of the representation~\eqref{eq:tt}.
%Given a tensor train representation $\bar X$ at the local step~\eqref{eq:emin1} we can choose a transformation $\H$ and apply it to fix the necessary structure of $\bar X$ before computing the reduced basis $\P_{\neq k}(\bar X).$
A prominent transformation $\H$ is the TT--orthogonalization algorithm proposed in~\cite{osel-tt-2011}.
It chooses matrices $H_k$ applying the QR factorization to the reshaped TT--cores, i.e., matrices of size $r_{k-1} \times n_k r_k$ and/or $r_{k-1}n_k \times r_k.$
The transformation $\H$ given by the TT--orthogonalization implies the left--orthogonality constrains on TT--cores $Y^{(1)},\ldots,Y^{(k-1)}$ and right--orthogonality on $Y^{(k+1)},\ldots,Y^{(d)},$ which results in the orthogonality of the interfaces $Y^{< k}$ and $Y^{>k}$ and hence the reduced basis $\Y_{\neq k}=\P_{\neq k}(\bar Y).$
Such a \emph{normalization} step will be  assumed in many algorithms throughout the paper; in most cases we will do this without introduction of a new representation $\bar Y$ just by `claiming' the necessary orthogonalization pattern of the TT representation we use.
If the reduced basis method is applied and such a representation $\bar X$ is chosen so that $\X_{\neq k}=\P_{\neq k}(\bar X)=\P$ is orthogonal, the spectrum of the reduced matrix $\P^\trans A \P$ lies between the minimum and maximum eigenvalues of the matrix $A.$
Indeed, using the Rayleigh quotient~\cite{hornjohnson-1985}, we write
\begin{equation}\nonumber
  \lambda_{\min}(\P^\trans \A \P) = \min_{\|v\|=1} (\P v, A \P v) = \min_{u\in\Span\P, \|u\|=1} (u,Au) \geq \min_{\|u\|=1} (u,Au) = \lmin(\A),
\end{equation}
and similarly for the maximum values. It follows that the reduced matrix is conditioned not worse than the original,
$
\cond(\X_{\neq k}^{\trans} \A \X_{\neq k}) \leq \cond(\A).
$
Therefore, the orthogonality of TT--cores ensures the stability of local problems and we will silently assume this for all reduced problems in this paper.

To conclude this part, let us calculate the complexity of the local problem~\eqref{eq:reduced}.
As was pointed out in \cite{DoOs-dmrg-solve-2011}, either a direct elimination, or an iterative linear solver with fast matrix-by-vector products (\emph{matvecs}) may be applied.
If the direct solution method is used, the costs which are required to form the $r_{k-1}n_kr_k\times r_{k-1} n_k r_k$ matrix of the local problem~\eqref{eq:reduced} are smaller than the complexity of the Gaussian elimination,  i.e., the overall cost is $\O(n^3 r^6).$%
\footnote{We will always assume that $n_1=\ldots=n_d=n$ and $r_1=\ldots=r_{d-1}=r$ in the complexity estimates.}
If an iterative method is used to solve the local problem, one multiplication $\X_{\neq k}^\trans \A \X_{\neq k}$ requires $\O(n r_A r^3 + n^2 r_A^2 r^2)$ operations, where $r$ and $r_A$ denote the TT--rank of the current solution $\x$ and  the matrix $\A$, respectively.
Careful implementation of the matvec is essential to reach this complexity, see~\cite{DoOs-dmrg-solve-2011} for details.
The complexity of the normalization step is only $\mathcal{O}(d n r^3)$ operations and can be neglected.

\subsection{DMRG--like minimization  and adaptivity of TT--ranks}\label{sec:dmrg}
In practical numerical work the TT--ranks of the solution are usually not known in advance, which puts a restriction on the use of the methods with fixed TT--ranks.
The underestimation of TT--ranks leads to a low accuracy of the solution, while the overestimation results in a large computational overhead.
This motivates the development of methods which can choose and modify the TT--ranks on--the--fly adaptively to the desired accuracy level.
A prominent example of such method is the Density Matrix Renormalization Group (DMRG) algorithm~\cite{white-dmrg-1993}, developed in the quantum physics community for the solution of a ground state problem.
DMRG performs similarly to the ALS but at each step combines two succeeding blocks $X^{(k)}$ and $X^{(k+1)}$ into one \emph{superblock}
\begin{equation}\label{eq:super}
w_k(\overline{\alpha_{k-1}i_ki_{k+1}\alpha_{k+1}}) = W^{(k)}_{\alpha_{k-1},\alpha_{k+1}}(\overline{i_ki_{k+1}}), \qquad W^{(k)}(\overline{i_ki_{k+1}}) = X^{(k)}(i_k) X^{(k+1)}(i_{k+1}),
\end{equation}
and make the minimization over $w_k.$
Classical DMRG minimizes the Rayleigh quotient, our version minimizes the energy function $J(\x),$ see~\cite{holtz-ALS-DMRG-2012,DoOs-dmrg-solve-2011}.
Similarly to~\eqref{eq:proj},\eqref{eq:reduced}  we write the local DMRG problem $B w_k = g_k$ as follows
\begin{equation}\label{eq:dmrg}
 \begin{split}
   \P = \P_{\notin \{k,k+1\}}(\bar X) & = X^{<k} \otimes I_{n_k} \otimes I_{n_{k+1}} \otimes \left( X^{>k+1} \right)^\top \in \C^{n_1\ldots n_d \times r_{k-1}n_k n_{k+1}r_{k+1}}, \\
    B & = \P^\trans \A \P, \qquad g_k = \P^\trans \y \in \C^{r_{k-1}n_k n_{k+1}r_{k+1}}.
 \end{split}
\end{equation}
When the $w_k$ is computed, new TT--blocks are obtained by the low--rank decomposition, i.e. the right-hand side of~\eqref{eq:super} is computed and  the $k$-th rank is updated adaptively to the chosen accuracy.
The minimization over $\O(n^2r^2)$ components of $w_k$ leads to complexity $\O(n^3)$, and seriously increases the computational time for systems with large mode sizes.
%In the next sections we will develop the family of methods with the complexity linear in $n,$ and convergence similar to the one of the DMRG.

\subsection{One--block enrichment as a Galerkin reduction of the two-dimensional system} \label{sec:gal}
Suppose that we have just solved~\eqref{eq:reduced} and updated the TT-block $X^{(k)}$.
Before we move to the next step, we would like to improve the reduced basis $\P_{\neq k+1}(\bar X)$ by adding a few vectors to it.
Denote the current solution vector by $\t=\tau(\bar T)$ and suppose we add a step $\s=\tau(\bar S).$
Then the updated solution $\x=\t+\s$ has the  TT--representation $\x=\tau(\bar X)$ defined as follows
\begin{equation}\label{eq:ttsum}
\begin{array}{rcl}
 X^{(1)}(i_1) & := & \begin{bmatrix}T^{(1)}(i_1) & S^{(1)}(i_1)\end{bmatrix}, \\
 X^{(p)}(i_p) & := &\begin{bmatrix}T^{(p)}(i_p) & 0 \\ 0 & S^{(p)}(i_p)\end{bmatrix}, \quad
 X^{(d)}(i_d):=\begin{bmatrix}T^{(d)}(i_d) \\ S^{(d)}(i_d)\end{bmatrix},
\end{array}
\end{equation}
where $p=2,\ldots,d-1.$
We will denote this tensor train as $\bar X = \bar T + \bar S.$%
\footnote{Due to the non--uniqueness of the TT--format other representations (probably with smaller TT--ranks) can exist for $\x=\t+\s.$}
The considered update affects the solution process in two ways: first, naturally, adds a certain correction to the solution, and second, enlarge the reduced basis that we will use at the \emph{next step} of the ALS minimization.
Indeed, it can easily be seen from definition~\eqref{eq:iface} that
\begin{equation}\nonumber
  X^{< k} = \begin{bmatrix} T^{< k} &  S^{< k} \end{bmatrix}, \qquad
  \left(X^{> k}\right)^\top = \begin{bmatrix} \left(T^{> k}\right)^\top &  \left(S^{> k}\right)^\top \end{bmatrix}.
\end{equation}
From~\eqref{eq:proj} we conclude that
$
\P_{\neq k}(\bar T + \bar S) = \begin{bmatrix} T^{< k} &  S^{< k} \end{bmatrix} \otimes I \otimes \begin{bmatrix} \left(T^{> k}\right)^\top &  \left(S^{> k}\right)^\top \end{bmatrix}
$
and hence
\begin{equation}\label{eq:TS}
  \X_{\neq k} = \P_{\neq k}(\bar T + \bar S) = \begin{bmatrix}
            \P_{\neq k}(\bar T)  &
            S^{<k} \otimes I \otimes \left(T^{>k}\right)^\top &
            T^{<k} \otimes I \otimes \left(S^{>k}\right)^\top &
            \P_{\neq k}(\bar S) \end{bmatrix}.
\end{equation}
The clever choice of $\s=\tau(\bar S)$ allows to add the essential vectors to $\Span\P_{\neq k}(\bar T + \bar S)$ and therefore improve the convergence of ALS.

A random choice of $\s\in\T_\r$ with some small TT--ranks $\r$ (cf. \emph{random kick} proposed in~\cite{so-dmrgi-2011proc,Os-mvk2-2011}) may lead to a slow convergence.
It also introduces an unwanted perturbation of the solution.
A more robust idea is to choose $\s$ in accordance to some one-step iterative method, for instance, take $\s \approx \z=\y-\A\t$ and construct a steepest descent or minimal residual method with approximations.
This choice allows to derive the convergence estimate similarly to the classical one and will be discussed in Sec.~\ref{sec:sd}.

To stay within methods of linear complexity, we restrict ourselves to zero shifts $\s=0$ with a simple TT--structure $\bar S=(0,\ldots,0,S^{(k)},0,\ldots,0).$
The tensor train $\bar X = \bar T + \bar S$ has the following structure%
\footnote{We give a description for the \emph{forward} sweep, i.e. the one with increasing $k=1,\ldots,d.$
For the \emph{backward} sweep the construction is done analogously.
}
\begin{equation}\label{eq:rich1}
 X^{(k)}(i_k) = \begin{bmatrix}T^{(k)}(i_k) & S^{(k)}(i_k)\end{bmatrix}, \qquad
 X^{(k+1)}(i_{k+1}) = \begin{bmatrix}T^{(k+1)}(i_{k+1}) \\ 0\end{bmatrix},
\end{equation}
and $X^{(p)}=T^{(p)}$ for other $p.$
Note that since $\s=0,$ the enrichment step does not affect the energy $J(\tau(\bar X)) = J(\tau(\bar T)).$
Therefore, we can choose $S^{(k)}$ freely and develop (probably, heuristic) approaches to improve the convergence of our scheme.
The reduced basis $\X_{\neq k+1} = \P_{\neq k+1}(\bar T + \bar S)$ depends on the choice of $S^{(k)}$ as follows (cf.~\eqref{eq:proj})
\begin{equation}\label{eq:S1}
 \begin{split}
  \X_{\neq k+1}(\overline{i_1\ldots i_d}, \overline{\alpha_k j_{k+1} \alpha_{k+1}}) & =
  X^{<k}(\overline{i_1 \ldots i_{k-1}},\alpha_{k-1}) X^{(k)}_{\alpha_{k-1},\alpha_k}(i_k) \delta(i_{k+1},j_{k+1}) X^{>k+1}(\alpha_{k+1},\overline{i_{k+2}\ldots i_d}),
  \\
  \X_{\neq k+1} & = X^{<k}_{\alpha_{k-1}} \otimes X^{(k)}_{\alpha_{k-1}} \otimes I_{n_{k+1}} \otimes \left( X^{>k+1} \right)^\top,
  \\
  X^{(k)}_{\alpha_{k-1}} & = \begin{bmatrix} T^{(k)}_{\alpha_{k-1}} & S^{(k)}_{\alpha_{k-1}} \end{bmatrix},
 \end{split}
\end{equation}
where $X^{<k}_{\alpha_{k-1}}$ is a column of $X^{<k}$ and $X^{(k)}_{\alpha_{k-1}}$ is the $n_k \times r_k$ matrix which is the \emph{slice} of  3-tensor $X^{(k)}=[X^{(k)}(\alpha_{k-1},i_k,\alpha_k)]$ corresponding to the fixed $\alpha_{k-1},$ and similarly for $S^{k}(\alpha_{k-1})$ and $T^{(k)}(\alpha_{k-1}).$
Below we will write the local system~\eqref{eq:reduced} at the step $k+1$ and see how it is affected by the choice of $S^{(k)}.$

\begin{figure}[t]
 \begin{center}
  \resizebox{.98\textwidth}{!}{ \def\TIKZcoresize{7mm}

 \def\ss{\scriptsize}
 \def\st{\scriptstyle}
 \def\L{\TIKZlspan}
 \def\H{\TIKZhspan}
 \begin{tikzpicture}[x=10mm,y=-10mm]
   \node[core]  (a5)                   {};    
   \node[corex] (x5) [above=\H of a5]  {};
   \node[corez] (x4) [left =\L of x5]  {};
   \node[corev] (y5) [below=\H of a5]  {};
   \node[corez] (y4) [left =\L of y5]  {};
   
   \node[core] (a6) [right=\L of a5]  {};
   \node[void] (a7) [right=\L of a6]  {};
   \node[core] (a8) [right=\L of a7]  {};
   \node[core] (a4) [left =\L of a5]  {};
   \node[core] (a3) [left =\L of a4]  {};
   \node[void] (a2) [left =\L of a3]  {};
   \node[core] (a1) [left= \L of a2]  {};
   
   \node[core] (x6) [right=\L of x5]  {};
   \node[void] (x7) [right=\L of x6]  {};
   \node[core] (x8) [right=\L of x7]  {};
   \node[core] (x3) [left =\L of x4]  {};
   \node[void] (x2) [left =\L of x3]  {};
   \node[core] (x1) [left= \L of x2]  {};
   
   \node[core] (y6) [right=\L of y5]  {};
   \node[void] (y7) [right=\L of y6]  {};
   \node[core] (y8) [right=\L of y7]  {};
   \node[core] (y3) [left =\L of y4]  {};
   \node[void] (y2) [left =\L of y3]  {};
   \node[core] (y1) [left= \L of y2]  {};

   \node[void] (l1) [above=.8\H of x1]  {${\st \phantom{0+}1}$};
   \node[void] (l2) [above=.8\H of x2]  {${\st \ldots}$};
   \node[void] (l3) [above=.8\H of x3]  {${\st k-1}$};
   \node[void] (l4) [above=.8\H of x4]  {${\st k\phantom{+0}}$};
   \node[void] (l5) [above=.8\H of x5]  {${\st k+1}$};
   \node[void] (l6) [above=.8\H of x6]  {${\st k+2}$};
   \node[void] (l7) [above=.8\H of x7]  {${\st \ldots}$};
   \node[void] (l8) [above=.8\H of x8]  {${\st d\phantom{+0}}$};

   \node[void] (lx) [left=.8\L of x1]     {${\st \P_{\neq k+1}(\bar X) x_{k+1}}$};
   \node[void] (la) [left=.8\L of a1]     {${\st A}$};
   \node[void] (ly) [left=.8\L of y1]     {${\st \P_{\neq k+1}(\bar X)}$};

   \path[bond]        (a1) -- node[left,midway]  {}                (x1);
   \path[bond]        (a3) -- node[left,midway]  {}                (x3);
   \path[bond]        (a4) -- node[left,midway]  {${\st j_{k}}$}   (x4);
   \path[bond]        (a5) -- node[left,midway]  {${\st j_{k+1}}$} (x5);
   \path[bond]        (a6) -- node[left,midway]  {}                (x6);
   \path[bond]        (a8) -- node[left,midway]  {}                (x8);
   
   \path[bond]        (a1) -- node[right,midway] {}                (y1);
   \path[bond]        (a3) -- node[right,midway] {}                (y3);
   \path[bond]        (a4) -- node[right,midway] {${\st i_{k}}$}   (y4);
   \path[bond]        (a5) -- node[right,midway] {${\st i_{k+1}}$} (y5);
   \path[bond]        (a6) -- node[right,midway] {}                (y6);
   \path[bond]        (a8) -- node[right,midway] {}                (y8);
   
   \path[bond]        (a1)      -- node[above,midway]  {}         (a2.west);
   \path[bond,dashed] (a2.west) --                                (a2.east);
   \path[bond]        (a2.east) -- node[above,midway]  {}         (a3);
   \path[bond]        (a3)      -- node[above,midway]  {}          (a4);
   \path[bond]        (a4)      -- node[above,midway]  {${\st\gamma_{k}}$}   (a5);
   \path[bond]        (a5)      -- node[above,midway]  {}          (a6);
   \path[bond]        (a6)      -- node[above,midway]  {}          (a7.west);
   \path[bond,dashed] (a7.west) --                                 (a7.east);
   \path[bond]        (a7.east) -- node[above,midway]  {}          (a8);
   
   \path[bond]        (x1)      -- node[above,midway]  {}                     (x2.west);
   \path[bond,dashed] (x2.west) --                                            (x2.east);
   \path[bond]        (x2.east) -- node[above,midway]  {}                     (x3);
   \path[bond]        (x3)      -- node[above,midway]  {${\st\beta_{k-1}}$}  (x4);
   \path[bond]        (x4)      -- node[above,midway]  {${\st\beta_{k}}$}    (x5);
   \path[bond]        (x5)      -- node[above,midway]  {${\st\beta_{k+1}}$}  (x6);
   \path[bond]        (x6)      -- node[above,midway]  {}                     (x7.west);
   \path[bond,dashed] (x7.west) --                                            (x7.east);
   \path[bond]        (x7.east) -- node[above,midway]  {}                     (x8);
   
   \path[bond]        (y1)      -- node[above,midway]  {}                     (y2.west);
   \path[bond,dashed] (y2.west) --                                            (y2.east);
   \path[bond]        (y2.east) -- node[above,midway]  {}                     (y3);
   \path[bond]        (y3)      -- node[above,midway]  {${\st\alpha_{k-1}}$}  (y4);
   \path[bond]        (y4)      -- node[above,midway]  {${\st\alpha_{k}}$}    (y5);
   \path[bond]        (y5)      -- node[above,midway]  {${\st\alpha_{k+1}}$}  (y6);
   \path[bond]        (y6)      -- node[above,midway]  {}                     (y7.west);
   \path[bond,dashed] (y7.west) --                                            (y7.east);
   \path[bond]        (y7.east) -- node[above,midway]  {}                     (y8);
%%
%%% rhs
%%
   \node[corev]  (u5)         [below=\H of y5]            {};
   \node[core]   (u6)         [right=\L of u5]            {};
   \node[void]   (u7)         [right=\L of u6]            {};
   \node[core]   (u8)         [right=\L of u7]            {};
   \node[corez]  (u4)         [left=\L  of u5]            {};
   \node[core]   (u3)         [left=\L  of u4]            {};
   \node[void]   (u2)         [left=\L  of u3]            {};
   \node[core]   (u1)         [left=\L  of u2]            {};

   \node[core]   (f5)         [below=\H of u5]            {};
   \node[core]   (f6)         [right=\L of f5]            {};
   \node[void]   (f7)         [right=\L of f6]            {};
   \node[core]   (f8)         [right=\L of f7]            {};
   \node[core]   (f4)         [left=\L  of f5]            {};
   \node[core]   (f3)         [left=\L  of f4]            {};
   \node[void]   (f2)         [left=\L  of f3]            {};
   \node[core]   (f1)         [left=\L  of f2]            {};
   
   \path[bond]        (u1)      -- node[above,midway]  {}                     (u2.west);
   \path[bond,dashed] (u2.west) --                                            (u2.east);
   \path[bond]        (u2.east) -- node[above,midway]  {}                     (u3);
   \path[bond]        (u3)      -- node[above,midway]  {${\st\alpha_{k-1}}$}  (u4);
   \path[bond]        (u4)      -- node[above,midway]  {${\st\alpha_{k}}$}    (u5);
   \path[bond]        (u5)      -- node[above,midway]  {${\st\alpha_{k+1}}$}  (u6);
   \path[bond]        (u6)      -- node[above,midway]  {}                     (u7.west);
   \path[bond,dashed] (u7.west) --                                            (u7.east);
   \path[bond]        (u7.east) -- node[above,midway]  {}                     (u8);

   \path[bond]        (f1)      -- node[above,midway]  {}                     (f2.west);
   \path[bond,dashed] (f2.west) --                                            (f2.east);
   \path[bond]        (f2.east) -- node[above,midway]  {}                     (f3);
   \path[bond]        (f3)      -- node[above,midway]  {}                     (f4);
   \path[bond]        (f4)      -- node[above,midway]  {}                     (f5);
   \path[bond]        (f5)      -- node[above,midway]  {}                     (f6);
   \path[bond]        (f6)      -- node[above,midway]  {}                     (f7.west);
   \path[bond,dashed] (f7.west) --                                            (f7.east);
   \path[bond]        (f7.east) -- node[above,midway]  {}                     (f8);
   
   \path[bond]        (u1) -- node[right,midway] {}                (f1);
   \path[bond]        (u3) -- node[right,midway] {}                (f3);
   \path[bond]        (u4) -- node[right,midway] {${\st i_{k}}$}   (f4);
   \path[bond]        (u5) -- node[right,midway] {${\st i_{k+1}}$} (f5);
   \path[bond]        (u6) -- node[right,midway] {}                (f6);
   \path[bond]        (u8) -- node[right,midway] {}                (f8);
   
   \node[void]  (lu) [left=.8\L of u1]     {${\st \P_{\neq k+1}(\bar X)}$};
   \node[void]  (lf) [left=.8\L of f1]     {${\st y}$};

   \path[draw=white]  (y5) -- node[midway] {{\Large $=$}} (u5);
 \end{tikzpicture}
 }
 \end{center}
 \caption{Linear system $\A\x=\y$ in the reduced basis $\P_{\neq k+1}(\bar X)$ shown by tensor networks.
         The reduced system has $r_{k}n_{k+1}r_{k+1}$ unknowns, shown by the dark box.
         Gray boxes show the $X^{(k)}$ which is updated by $S^{(k)}$ to improve the convergence.
         White boxes contribute to the local matrix $B$ and right-hand side $g$ of the 2D system~\eqref{eq:dmrg}.
         }
 \label{fig:reduced}
\end{figure}
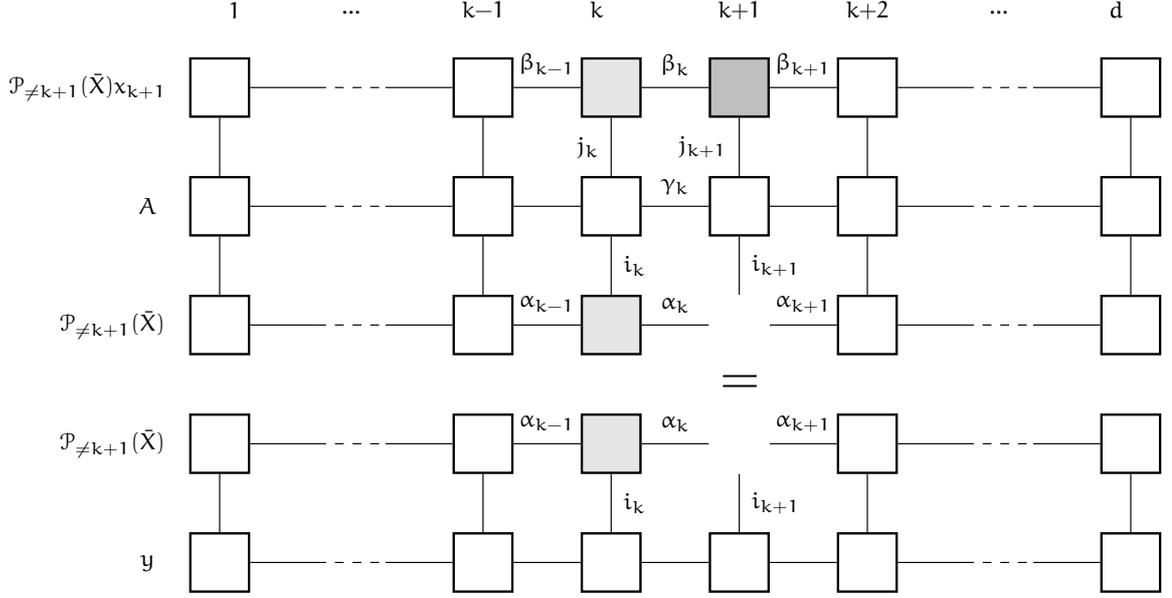
%
% To simplify the notation, we recall from Sec.~\ref{sec:dmrg} the local matrix $B=B_k$ and the right-hand side $g$ of the DMRG step.
The two-dimensional system defined by~\eqref{eq:dmrg} is shown by gray boxes in the Fig.~\ref{fig:reduced}.
It appears here as the local problem in the DMRG method, but in the same framework we may consider the whole initial system with $d=2$, and $k=1$, depending on what type of analysis we would like to perform.

Now the reduced system for the elements of $x_{k+1}(\overline{\beta_k j_{k+1}\beta_{k+1}})=X^{(k+1)}(\beta_k,j_{k+1},\beta_{k+1})$ writes
\begin{equation}\label{eq:Bsys}
 \begin{split}
  X^{(k)}_{\alpha_k,a} B_{ab,a'b'} X^{(k)}_{a',\beta_k} X^{(k+1)}_{\beta_k,b'} & = X^{(k)}_{\alpha_k,a} G_{a,b}, \\
  \begin{bmatrix} X^{(k)} \otimes I \end{bmatrix}^\trans
  B
  \begin{bmatrix} X^{(k)} \otimes I \end{bmatrix} x_{k+1} & =
  \begin{bmatrix} X^{(k)} \otimes I \end{bmatrix}^\trans g,
 \end{split}
\end{equation}
where the following multi-indices are introduced for brevity of notation
\begin{equation}\nonumber
 \begin{array}{lll}
 \overline{\alpha_{k-1} i_k} = a,     & \overline{\beta_{k-1}  j_k} = a',     & a,a' = 1,\ldots,r_{k-1}n_k, \\
 \overline{i_{k+1} \alpha_{k+1}} = b, & \overline{j_{k+1}  \beta_{k+1}} = b', & b,b' = 1,\ldots,r_{k+1}n_{k+1},\\
 \end{array}
\end{equation}
and $X^{(k)} \in \C^{r_{k-1}n_k \times r_k},$  $I = I_{r_{k+1}n_{k+1}}.$
The system~\eqref{eq:Bsys} has $r_kn_{k+1}r_{k+1}$ unknowns.
At the same time it is the reduction of a 2D system $B w = g$ which has $r_{k-1}n_k n_{k+1}r_{k+1}$ unknowns.
Therefore, the choice of the enrichment $S^{(k)}$ (as a part of $X^{(k)}$) can be considered as a cheaper approximation of the 2D system solution.
Taking into account the structure of $X^{(k)}$ from~\eqref{eq:S1} we rewrite~\eqref{eq:Bsys} as follows
\begin{equation}\label{eq:Bsys2}
  %\begin{bmatrix} T^{(k)} \otimes I  & S^{(k)} \otimes I \end{bmatrix}^\trans
  \begin{bmatrix} T  &  S \end{bmatrix}^\trans
  B
  \begin{bmatrix} T  &  S \end{bmatrix}
  x_{k+1}  =
  \begin{bmatrix} T  &  S \end{bmatrix}^\trans
  g,
  \qquad
  T = T^{(k)} \otimes I, \quad S = S^{(k)} \otimes I.
\end{equation}

The system \eqref{eq:Bsys2} is difficult to analyze.
However, we may propose a certain approximation to its solution, and estimate the quality of the solution to the whole system \eqref{eq:Bsys2} via the properties of the approximation.
Namely, let us consider the zero--padded TT--core $X^{(k+1)}$ in~\eqref{eq:rich1} as the~\emph{initial guess}, i.e., some information about the solution $x_{k+1}$ that we want to use.
For instance, we can apply the block Gauss--Seidel step, restricting the unknown block to the form
\begin{equation}\label{eq:TV}
 X^{(k+1)}(i_{k+1}) = \begin{bmatrix}T^{(k+1)}(i_{k+1}) \\ V(i_{k+1}) \end{bmatrix}, \qquad
  \begin{array}{rl}
  t(\overline{\alpha_k' i_{k+1} \alpha_{k+1}})  & = T^{(k+1)}_{\alpha_k' \alpha_{k+1}}(i_{k+1}), \\
  v(\overline{\alpha_k'' i_{k+1} \alpha_{k+1}}) & = V_{\alpha_k'' \alpha_{k+1}}(i_{k+1}).
  \end{array}
\end{equation}
Then~\eqref{eq:Bsys2} writes as the following overdetermined system
\begin{equation}\nonumber
  \begin{bmatrix} T^\trans \\  S^\trans \end{bmatrix}
  B
  \begin{bmatrix} T t +  S v\end{bmatrix}
  =
  \begin{bmatrix} T^\trans \\  S^\trans \end{bmatrix}
  g,
\end{equation}
and following the Gauss--Seidel step we solve it considering only the lower part
\begin{equation}\label{eq:gal}
 (S^\trans B S) v = S^\trans (g - B T t).
\end{equation}

Equation~\eqref{eq:gal} is a Galerkin reduction method with the basis $S$ applied to the system $B w = g$ with the initial guess~\eqref{eq:super}, and TT--cores $X^{(k)}$ and $X^{(k+1)}$ defined by~\eqref{eq:rich1}.
After~\eqref{eq:gal} is solved, the updated superblock $W^{(k)}_\new$ writes as follows
\begin{equation}\label{eq:dmrgadd}
 \begin{split}
   X^{(k)}(i_k) & =\begin{bmatrix}T^{(k)}(i_k) & S^{(k)}(i_k)\end{bmatrix}, \qquad
   X^{(k+1)}_\new(i_{k+1}) =\begin{bmatrix}T^{(k+1)}(i_{k+1}) \\ V(i_{k+1})\end{bmatrix},
  \\
  W_\new^{(k)}(\overline{i_ki_{k+1}}) & = T^{(k)}(i_k)T^{(k+1)}(i_{k+1}) + S^{(k)}(i_k) V(i_{k+1})
      \\ & = W^{(k)}(\overline{i_ki_{k+1}})  + S^{(k)}(i_k) V(i_{k+1}),
 \end{split}
\end{equation}
which allows to consider the proposed method as a solver for the 2D system, which performs the~\emph{low--rank correction} for the superblock rather than recompute it from scratch.

Equations \eqref{eq:Bsys} and \eqref{eq:gal} can be considered as certain approximate approaches to the solution of the 2D system \eqref{eq:dmrg}.
Different such approaches can be collected into Table \ref{tab:dmrg}, sorted from the highest to the lowest accuracy.
\begin{table}[t]
\begin{center}
 \begin{tabular}{l|cc|cc|c}
 Method                             & \multicolumn{2}{c|}{$X^{(k)}$} & \multicolumn{2}{c|}{$X^{(k+1)}$}     & Complexity   \\
                                    & $T^{(k)}$       & $S^{(k)}$    & $T^{(k+1)}$  & $V$                   &        \\\hline
 DMRG \eqref{eq:dmrg}               & \multicolumn{2}{c|}{optimize}  & \multicolumn{2}{c|}{optimize}        & $\O(r^3n^3)$\\
 AMEn \eqref{eq:Bsys}               & keep            & choose       & \multicolumn{2}{c|}{optimize}        & $\O(r^2n)$\\
 DMRG correction                    & keep            & optimize     & keep         & optimize              & $\O(\rho^3 r n)$\\
 Galerkin correction \eqref{eq:gal} & keep            & choose       & keep         & optimize              & $\O(\rho r n)$
\end{tabular}
\end{center}
\caption{
Comparison of different solution methods for a two--dimensional system~\eqref{eq:dmrg} with blocks given by~\eqref{eq:dmrgadd}.
We may \emph{keep} the block from the previous iteration, \emph{choose} it arbitrary (eg., using quasi--optimal or heuristic choice) or \emph{optimize} solving the reduced system.
In the complexity estimates, $r$ is typical rank of $\bar X$ and $\rho$ is typical rank of $\bar S.$
}
\label{tab:dmrg}
\end{table}

%\newpage %%%%%%%%%%%%%%%%%%%%%%%%%%%%%%%%%%%%%
\section{Steepest descent schemes} \label{sec:sd}
\subsection{Steepest descent with perturbation} \label{sec:sd1}
Given the initial guess $t,$ the \emph{steepest descent} (SD) step minimizes the energy function~\eqref{eq:energy} over vectors $x=t+s\alpha,$ where the step is chosen as follows
\begin{equation}\nonumber
 \begin{split}
  s & = -\grad J(t) = y-At = z, \\
  \alpha & = \arg\min J(t+z\alpha) = \frac{(z,z)}{(z,Az)}.
 \end{split}
\end{equation}
The solution after the SD step satisfies the so-called \emph{Galerkin condition} $(z,y-Ax)=0.$
The progress of the SD step can be analyzed in terms of $A$--norms of errors $c=x_*-t$ and $d=x_*-x$ as follows
$$
 x=t+z \frac{\|z\|^2}{\|z\|_A^2},
 \qquad
 d = c - z\frac{\|z\|^2}{\|z\|_A^2} = (I - R_z) c.
$$
This gives interpretation in terms of projections and proves the monotone decrease of the energy function $J_A(x) = \|d\|_A^2 \leq \|c\|_A^2 = J_A(t).$
To estimate the convergence rate, we write
\begin{equation}\nonumber
 \|d\|_A^2 = (c, (I-R_z)^*A(I-R_z) c) = (c, A(I-R_z)c) = \omega^2_z\|c\|_A^2, \quad \omega^2_z=\frac{(c,(I-R_z)c)_A}{(c,c)_A}.
\end{equation}
The convergence rate $\omega_z$ is therefore a square root of the Rayleigh quotient  for $I-R_z$ in the $A$--scalar product.
It can be bounded using the Kantorovich inequality~\cite{kantorovich-1948} as follows
\begin{equation}\label{eq:sdomega}
 \omega^2_z=1-\frac{(z,z)}{(z,Az)} \frac{(z,z)}{(z,A^{-1}z)} \leq \left( \frac{\lmax-\lmin}{\lmax+\lmin} \right)^2,
\end{equation}
where $\lmax$ and $\lmin$ denote the largest and smallest eigenvalues of $A,$ respectively.

The residual $z=y-At$ of the steepest descent method can not be computed exactly for high--dimensional problems.
Suppose that it is approximated by $\tilde z$ and the perturbed SD step is applied as follows
\begin{equation}\label{eq:sd1}
 x=t+\tilde z \frac{\|\tilde z\|^2}{\|\tilde z\|_A^2}, \qquad
 \tilde d=c - \tilde z \frac{\|\tilde z\|^2}{\|\tilde z\|_A^2} = (I-R_{\tilde z})c + R_{\tilde z} (c - \tilde c),
\end{equation}
where $A\tilde c=\tilde z.$
We further restrict ourselves to the perturbations of the following form
\begin{equation}\label{eq:pert}
 z=\tilde z+\dz,\quad (\tilde z, \dz)=0,
 \qquad
  \|\dz\|_A\leq\eps\|\tilde z\|_A\leq\eps\|z\|_A,
\end{equation}
which will appear naturally in our algorithms for higher dimensions.
For such perturbations the second term vanishes, $R_{\tilde z}(c-\tilde c)=0,$ and the perturbation of the SD step writes through the perturbation of $A$--orthogonal projectors as follows
\begin{equation}\nonumber
 \tilde d - d = -\left( R_{\tilde z} - R_z \right) c.
\end{equation}
A comprehensive overview of the perturbation theory for projections, pseudo--inverses and least square problems can be found in~\cite{steward-1977}.
Rather than adapting their results to the case of $A$--orthogonal projectors, we will develop a more accurate estimate for $\tilde d - d$ using specifically the perturbations~\eqref{eq:pert}.
\begin{theorem}\label{thm:sd1}
 For $\tilde z$ given by~\eqref{eq:pert} the progress of the perturbed SD step~\eqref{eq:sd1} writes as follows
 $$
 \|\tilde d\|_A \leq \omega_{\tilde z} \|c\|_A, \qquad \omega_{\tilde z} = \omega_z + \eps\sqrt{2(1-\omega^2_z)} + \frac{1}{2\sqrt2} \eps^3\cond^2(A),
 $$
 where $\omega_z$ is the progress of the unperturbed SD step given by~\eqref{eq:sdomega}.
\end{theorem}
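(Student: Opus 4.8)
The plan is to reduce the whole estimate to the geometry of two $A$-orthogonal projectors and the angle between them, working entirely in the $A$-inner product so that the perturbation bound \eqref{eq:pert} can be used without conversion. First I would pin down the vanishing of the second term in \eqref{eq:sd1}. Writing $\c=\x_*-\t$ we have $\A\c=\z$, so the energy-optimal step length along any direction $\s$ is $(\s,\c)_A/\|\s\|_A^2$. For $\s=\tilde\z$ the orthogonality $(\tilde\z,\dz)=0$ gives $(\tilde\z,\c)_A=\tilde\z^\trans\A\c=\tilde\z^\trans\z=\|\tilde\z\|^2$, so the coefficient $\|\tilde\z\|^2/\|\tilde\z\|_A^2$ appearing in \eqref{eq:sd1} equals the $A$-optimal one $(\tilde\z,\c)_A/\|\tilde\z\|_A^2$; hence the perturbed step is the $A$-optimal step along $\tilde\z$ and $\tilde d=(I-R_{\tilde z})\c$.

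Next I would split $\c$ along and across $\z$. Set $p=R_z\c$ and $d=(I-R_z)\c$, an $A$-orthogonal decomposition, so by the definition of $\omega_z$ and the Pythagorean identity $\|d\|_A=\omega_z\|\c\|_A$ and $\|p\|_A=\sqrt{1-\omega_z^2}\,\|\c\|_A$. Applying $I-R_{\tilde z}$ to $\c=p+d$ and using that $A$-orthogonal projectors are $A$-nonexpansive,
\[
 \|\tilde d\|_A \leq \|(I-R_{\tilde z})d\|_A + \|(I-R_{\tilde z})p\|_A \leq \|d\|_A + \|p\|_A\sin\phi,
\]
where $\phi$ is the angle between $\z$ and $\tilde\z$ in the $A$-inner product; the second inequality holds because $p$ is parallel to $\z$, so removing its $A$-projection onto $\tilde\z$ scales its $A$-norm by $\sin\phi$. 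It then remains to bound $\sin\phi$. With $\beta=(\dz,\tilde\z)_A/\|\tilde\z\|_A^2$ and $\gamma=\|\dz\|_A^2/\|\tilde\z\|_A^2$, the assumption \eqref{eq:pert} gives $\gamma\leq\eps^2$, while Cauchy--Schwarz in the $A$-product gives $\beta^2\leq\gamma$; substituting $\z=\tilde\z+\dz$ yields $\sin^2\phi=(\gamma-\beta^2)/(1+2\beta+\gamma)$, which a short optimization over these constraints bounds by $\eps^2$. Thus $\sin\phi\leq\eps$ and $\|\tilde d\|_A\leq(\omega_z+\eps\sqrt{1-\omega_z^2})\|\c\|_A$.

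This is already sharper than the claimed estimate and implies it, since $\sqrt2>1$ and the cubic term is nonnegative. The stated constants arise on the alternative route, where one expands $\omega_{\tilde z}^2=1-\|\tilde\z\|^4/(\|\tilde\z\|_A^2\|\c\|_A^2)$ directly: there one must control $\|\z\|^2=\|\tilde\z\|^2+\|\dz\|^2$, and converting the $A$-norm bound on $\dz$ into an $l_2$ bound forces $\|\dz\|^2\leq\eps^2\cond(A)\|\tilde\z\|^2$, after which squaring and a Taylor expansion of the quotient produce the $\eps^3\cond^2(A)$ term together with the slack factor $\sqrt2$. I expect the main obstacle to be exactly this clash of geometries---the step length and the orthogonality are $l_2$ notions while convergence is measured in the $A$-norm---and the cleanest way around it is to route the entire estimate through the $A$-angle $\phi$, which keeps every quantity in the $A$-inner product and avoids the condition number altogether.
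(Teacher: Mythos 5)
Your proof is correct, and it takes a genuinely different route from the paper's. The paper works with the difference of the two steps: it derives an explicit identity for $R_z-R_{\tilde z}$, splits $(R_z-R_{\tilde z})c$ into a component along $\tilde z$ and one $A$-orthogonal to it, and bounds the coefficient $(p,z)$ of the first component via $\|\dz\|_{A^{-1}}$ — it is precisely the conversion $\|\dz\|_{A^{-1}}\le\lmin^{-1}\|\dz\|_A$ together with $\|z\|_A/\|z\|_{A^{-1}}\le\lmax$ that produces the $\eps^3\cond^2(A)$ term, and the Pythagorean combination of the two components that produces the $\sqrt2$. You instead bound $\|\tilde d\|_A$ directly: the observation that $(\tilde z,\dz)=0$ makes the perturbed step length $A$-optimal along $\tilde z$ (so $\tilde d=(I-R_{\tilde z})c$, the same fact the paper uses to kill the $R_{\tilde z}(c-\tilde c)$ term), followed by the $A$-orthogonal splitting $c=R_zc+(I-R_z)c$, nonexpansiveness of $I-R_{\tilde z}$ in the $A$-norm, and the elementary bound $\sin\phi\le\eps$ on the $A$-angle between $z$ and $\tilde z$. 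I checked the angle optimization: with $\beta=(\dz,\tilde z)_A/\|\tilde z\|_A^2$ and $\gamma=\|\dz\|_A^2/\|\tilde z\|_A^2$ the maximum of $(\gamma-\beta^2)/(1+2\beta+\gamma)$ subject to $\beta^2\le\gamma$ is exactly $\gamma\le\eps^2$ (attained at $\beta=-\gamma$), and the constraint $\|\tilde z\|_A\le\|z\|_A$ from \eqref{eq:pert} makes it immediate anyway. Your bound $\|\tilde d\|_A\le\bigl(\omega_z+\eps\sqrt{1-\omega_z^2}\bigr)\|c\|_A$ is therefore valid, strictly sharper than the theorem's (it is tight, e.g., for $A=I$), free of $\cond(A)$, and implies the stated estimate. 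What the paper's computation buys in exchange is an explicit expression for $\tilde d-d$ itself, which is not needed for this theorem; what your route buys is that the whole argument stays in the $A$-geometry, so the admissible perturbation level $\eps_*$ in the subsequent Remark would no longer degrade like $\kappa^{-1}$. It would be worth stating the sharper constant explicitly rather than only noting that it dominates the claimed one.
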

\begin{proof}
 For $z=\tilde z+\dz$ the following simple identity can be verified from definition
 \begin{equation}\nonumber
  R_z - R_{\tilde z} = \frac{\tilde z\: \dz^\trans}{\|\tilde z\|_A^2}(I-R_z^\trans)A + (I-R_{\tilde z})\frac{\dz\: z^\trans}{\|z\|_A^2} A.
 \end{equation}
 The perturbation of the SD step $\tilde d - d = (R_z - R_{\tilde z})c$ writes
 \begin{equation}\nonumber
   \tilde d - d = \tilde p\frac{\|z\|^2}{\|z\|_A^2}  + \tilde z\frac{(p,z)}{\|\tilde z\|_A^2},
 \end{equation}
 where $p=(I-R_z)\dz$ and $\tilde p=(I-R_{\tilde z})\dz.$
 Obviously, $\|\tilde p\|_A \leq \|\dz\|_A \leq \eps \|z\|_A.$
 To estimate the $A$--norm of the second term, we write
 \begin{equation}\nonumber
  \begin{split}
  (z,p) & = z^\trans (I-R_z)\dz
          = z^\trans \dz - \frac{z^\trans z z^\trans A \dz}{\|z\|_A^2}
          = \|\dz\|^2 - \frac{\|z\|^2}{\|z\|_A^2} (z,\dz)_A
   \\   & = (A^{-1}\dz - \gamma z, \dz)_A,
  \end{split}
 \end{equation}
 where $\gamma={\|z\|^2}/{\|z\|_A^2}.$
 Then $|(p,z)| \leq \|A^{-1}\dz - \gamma z\|_A \|\dz\|_A$ and
 \begin{equation}\nonumber
  \begin{split}
   \|A^{-1}\dz - \gamma z\|_A^2
            & = \|A^{-1}\dz\|_A^2 - 2\gamma (\dz, z) + \gamma^2 \|z\|_A^2
              = \gamma^2\|z\|_A^2 + \|\dz\|_{A^{-1}}^2 - 2\gamma \|\dz\|^2
      \\    &  \leq \gamma^2\|z\|_A^2 + \|\dz\|_{A^{-1}}^2.
  \end{split}
 \end{equation}
 Since $\tilde p$ and $\tilde z$ are $A$--orthogonal, we write
 \begin{equation}\nonumber
   \|\tilde d - d\|_A^2 = \|\tilde p\|^2_A \gamma^2  + \frac{|(p,z)|^2}{\|\tilde z\|_A^2}
    \leq \eps^2 \|z\|_A^2\gamma^2 + \eps^2(\gamma^2\|z\|_A^2 + \|\dz\|_{A^{-1}}^2)
    = \eps^2\left(2\frac{\|z\|^4}{\|z\|_A^2} + \|\dz\|_{A^{-1}}^2\right).
 \end{equation}
 Finally, we estimate
 \begin{equation}\nonumber
  \frac{\|\tilde d - d\|_A}{\|c\|_A}
   \leq \eps\sqrt2 \frac{\|z\|^2}{\|z\|_A\|c\|_A} + \eps\frac{\|\dz\|_{A^{-1}}^2 \|z\|_A}{2\sqrt2 \|z\|^2 \|z\|_{A^{-1}}}
   \leq \eps \sqrt{2(1-\omega^2_z)} + \eps^3 \frac{\cond^2(A)}{2\sqrt2},
 \end{equation}
 where the last inequality is based on
 $$
  \|u\|_{A^{-1}} \leq \lmin^{-1/2} \|u\| \leq \lmin^{-1} \|u\|_A,
  \qquad
  \|u\|_{A^{-1}} \geq \lmax^{-1/2} \|u\| \geq \lmax^{-1} \|u\|_A,
  \qquad
  \cond(A)=\frac\lmax\lmin.
 $$
 Since $\|\tilde d\|_A \leq \|\tilde d - d\|_A + \|d\|_A,$ we obtain the statement of the theorem.
\end{proof}
\begin{remark}
 If $\omega_z<1$ there exists $\eps_*>0$ such that for all $0< \eps < \eps_*$ it holds $\omega_{\tilde z}<1.$
 This critical value $\eps_*(\kappa,\omega)$ is the real positive root of the cubic equation $\omega_{\tilde z}(\eps)=1,$ where $\kappa=\cond(A)$ and $\omega$ act as parameters.
 The minimal value of $\eps_*(\kappa,\omega)$ for $\omega \le (\kappa-1)/(\kappa+1)$ and $\kappa\to\infty$  behaves as
 $
 \eps_* = \kappa^{-1} + \O(\kappa^{-3/2}).
 $
\end{remark}

\subsection{Steepest descent in two dimensions} \label{sec:sd2}
Consider the two--dimensional linear system $Ax=y$ written in the elementwise notation as follows%
\footnote{
We consider $x$ and $y$ as vectors and at the same time as two-dimensional arrays $x=[x(j_1,j_2)]$ and $y=[y(i_1,i_2)]$ with the same entries.
We will switch freely between these representations without change of a notation.
}
\begin{equation}\nonumber
 A(\overline{i_1i_2},\overline{j_1j_2}) x(\overline{j_1j_2}) = y(\overline{i_1i_2}), \qquad i_1,j_1=1,\ldots,n_1, \quad i_2,j_2=1,\ldots,n_2.
\end{equation}
As previously, we assume $A$ and $y$ to be given, and $x$ to be sought in the following low-rank decomposition format
\begin{equation}\label{eq:2d}
 \begin{split}
  A(\overline{i_1i_2},\overline{j_1j_2}) & = A^{(1)}_{\gamma}(i_1,j_1) A^{(2)}_{\gamma}(i_2,j_2), \quad A^{(p)} = [A^{(p)}_{\gamma}(i_p,j_p)] \in \C^{n_p \times n_p \times r_A}, \\
  y(\overline{i_1i_2}) & = y^{(1)}_{\beta}(i_1) y^{(2)}_{\beta}(i_2),  \quad Y^{(p)} = [y^{(p)}_{\beta}(i_p)] \in \C^{n_p \times r_y}, \\
  x(\overline{j_1j_2}) & = x^{(1)}_{\alpha}(j_1) x^{(2)}_{\alpha}(j_2),  \quad X^{(p)} = [x^{(p)}_{\alpha}(j_p)] \in \C^{n_p \times r_x},
 \end{split}
\end{equation}
where $p=1,2.$
Given the initial guess $t$ in the same format, we compute the low--rank approximation of the residual $\tilde z \approx z=y-At$ as follows
\begin{equation}\nonumber
  \tilde z(\overline{i_1i_2})  = z^{(1)}_{\zeta}(i_1) z^{(2)}_{\zeta}(i_2),  \qquad
  Z^{(1)} = [z^{(1)}_{\zeta}(i_1)] \in \C^{n_1 \times r_z}, \quad
  Z^{(2)} = [z^{(2)}_{\zeta}(i_2)] \in \C^{n_2 \times r_z}.
\end{equation}
Following the perturbed SD algorithm, we can write the updated solution $x=t+\tilde z\alpha$ in a form
\begin{equation}\label{eq:sol2}
 x(\overline{j_1j_2}) =
  \begin{bmatrix}T^{(1)}(j_1) &  Z^{(1)}(j_1)\end{bmatrix} \:
  \begin{bmatrix}T^{(2)}(j_2) \\ Z^{(2)}(j_2)\alpha\end{bmatrix},
\end{equation}
and optimize by the step size $\alpha.$
Recalling the considerations from Section \ref{sec:gal}, we can consider more efficient optimization steps listed in Table~\ref{tab:dmrg}.
For example, the solution of DMRG system \eqref{eq:dmrg} corresponds to the exact solution of the considered 2D system.
We will particularly consider the Galerkin correction framework, i.e., will optimize over the bottom block of $X^{(2)},$ denoted as $V$ in~\eqref{eq:dmrgadd}.
This is the cheapest method in Table~\ref{tab:dmrg}, and all other methods have better convergence properties.

In the proposed method we choose the step $x = t + Zv$ where
$$
Z = Z^{(1)} \otimes I_{n_2} \in \C^{n_1n_2 \times r_\zeta n_2}, \quad v(\overline{\zeta i_2})=V_\zeta(i_2),
$$
and without the loss of generality assume the orthogonality of $Z.$
Minimization of the energy function $J(x)$ over $v$ leads to the set of Galerkin conditions $Z^\trans(y-Ax)=0$ and the step writes as follows
\begin{equation}\label{eq:sd2}
  x = t + Zv, \qquad (Z^\trans A Z)v = Z^\trans \tilde z.
\end{equation}
Note that if we restrict ourselves to the perturbations $z = \tilde z + \dz$ such that $Z^\trans \dz = 0,$ it holds
$$
 v = (Z^\trans A Z)^{-1} Z^\trans \tilde z = (Z^\trans A Z)^{-1} Z^\trans  z.
$$
Then the accuracy of the proposed method can be estimated similarly to the standard SD step
$%\begin{equation}\nonumber
 d = c-Zv = (I-R_Z)c,
$%\end{equation}
 and the progress of this step writes
\begin{equation}\label{eq:sd2omega}
 \|d\|_A^2 = \omega^2_Z \|c\|_A^2, \qquad \omega^2_Z=\frac{(c,(I-R_Z)c)_A}{(c,c)_A}.
\end{equation}
Since $\tilde z\in\Span Z$ it follows that $\omega_Z \leq \omega_{\tilde z},$ i.e., the convergence of the proposed method~\eqref{eq:sd2} is not slower than the one of the perturbed SD step~\eqref{eq:sd1} estimated in Thm.~\ref{thm:sd1}.

\begin{remark}
 When $\Span Z = \C^{n_1n_2}$ we converge in one iteration, i.e. $\omega_Z=0.$
 For large $Z$ s.t. $z\in\Span Z$ we can expect $\omega_Z \ll \omega_z.$
 In general, however, the inequality $\omega_Z \leq \omega_z$ is sharp.
 To show this, consider $Z = \begin{bmatrix} z & s \end{bmatrix}$ with $(z,s)=0.$
 It is easy to show that
 \begin{equation}\nonumber
 1-\omega_Z^2 =  \frac{\|z\|^4 \|s\|_A^2}{\|z\|_{A^{-1}}^2 \left( \|s\|_A^2 \|z\|_A^2 - |(s,z)_A|^2  \right) }, \qquad
 \frac{1-\omega_z^2}{1-\omega_Z^2} = 1 -  \frac{|(s,z)_A|^2}{\|s\|_A^2 \|z\|_A^2} \leq 1,
 \end{equation}
 which proves $\omega_Z \leq \omega_z.$
 However, the ratio can be equal to one when $(s,z)_A=0$ and $(s,z)=0$ simultaneously.
 It can happen, eg. if $s$ is an eigenvector of $A.$
 Similarly, if there is a $k$--dimensional invariant subspace of $A$ which is orthogonal to $z,$ we can form $Z=\begin{bmatrix} z & s_1 \ldots s_k  \end{bmatrix}$ from the basis vectors of this subspace and have the same convergence $\omega_z = \omega_Z$ as the SD step does.
 %If there is no invariant subspace of $A$ orthogonal to $z,$ then $\omega_Z < \omega_z$ whenever $\rank Z > 1.$
\end{remark}

To find the correction term $v$ we have to solve the reduced linear system size $r_zn_2,$ which writes as follows
\begin{equation} \label{eq:Bcor}
 Bv=g, \qquad B=Z^\trans A Z, \qquad g = Z^\trans z.
\end{equation}
Suppose that $n_2$ is still too large for the system to be solved exactly and we find the approximate solution $v\approx v_* = B^{-1}g.$
The simplest idea is to solve the reduced problem by the standard SD method.
The following theorem estimates the progress of such `lazy' approach.
\begin{theorem}\label{thm:sd2}
 Consider the system $Ax=y$ with the initial guess $t$ and error $c=x_*-t.$
 After one outer step of SD~\eqref{eq:sd2} and one inner step of SD applied to the reduced problem~\eqref{eq:Bcor}, the error $d=x_*-x$ writes as follows
 \begin{equation}\label{eq:sd2a}
   \begin{split}
    d        & = \left( (I-R_Z) + Z (I-Q_g) Z^\trans R_Z \right) c,  \\
             & = \left( (I-R_Z) + (I-R_{\tilde z}) R_Z \right) c,  \\
    \|d\|_A^2& = \left( \omega_Z^2 + (1-\omega_Z)^2\omega_g^2 \right) \|c\|_A^2,
   \end{split}
 \end{equation}
 where $Q_g$ is the $B$--orthogonal projector on $g,$ and $\omega_g \leq \omega_{\tilde z}.$
\end{theorem}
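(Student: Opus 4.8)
The plan is to decompose the total error into the contribution of the exact outer step plus the defect of the single inner SD iteration, then translate the reduced--space $B$--orthogonal projection $Q_g$ into the full--space $A$--orthogonal projector $R_{\tilde z}$, and finally exploit $A$--orthogonality to obtain a Pythagorean identity for the $A$--norm.

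First I would write $x=t+Zv$ with $v$ the inner SD iterate and introduce the exact reduced solution $v_*=B^{-1}g$. Using $z=Ac$ and $g=Z^\trans z$ (valid since $Z^\trans\dz=0$), one has $Zv_*=Z(Z^\trans AZ)^{-1}Z^\trans Ac=R_Z c$, so $d=c-Zv=(c-Zv_*)+Z(v_*-v)=(I-R_Z)c+Z(v_*-v)$. The inner SD step on $Bv=g$ from the zero guess produces $v$ with defect $v_*-v=(I-Q_g)v_*$, the same computation as for the outer step since $Q_g v_*=g\,\|g\|^2/\|g\|_B^2$ is precisely the SD update. Because $Z^\trans Z=I$ gives $Z^\trans R_Z c=v_*$, I get $Z(v_*-v)=Z(I-Q_g)Z^\trans R_Z c$, which is the first displayed line.

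The key step, and the one I expect to be the main obstacle, is matching the reduced $B$--geometry to the full $A$--geometry, i.e.\ proving $ZQ_gZ^\trans R_Z=R_{\tilde z}R_Z$. Since $\tilde z\in\Span Z$ and $g=Z^\trans\tilde z$, orthogonality gives $\tilde z=Zg$, hence $R_{\tilde z}=Zg(g^\trans Bg)^{-1}g^\trans Z^\trans A$. On the other hand, using $B=Z^\trans AZ$, $ZQ_gZ^\trans=Zg(g^\trans Bg)^{-1}g^\trans Z^\trans A\,ZZ^\trans=R_{\tilde z}ZZ^\trans$, and composing with $R_Z$ and using $ZZ^\trans R_Z=R_Z$ closes the identity. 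Substituting into the first line and using $ZZ^\trans R_Z=R_Z$ once more yields the second line $d=\big((I-R_Z)+(I-R_{\tilde z})R_Z\big)c$.

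For the norm, set $d_0=(I-R_Z)c$ and $d_1=(I-R_{\tilde z})R_Z c$. As $R_Z$ is an $A$--orthogonal projector, $d_0$ is $A$--orthogonal to $\Span Z$, while $d_1\in\Span Z$ (both $R_Z c$ and $R_{\tilde z}R_Z c$ lie there, since $\Span\tilde z\subseteq\Span Z$); hence $(d_0,d_1)_A=0$ and $\|d\|_A^2=\|d_0\|_A^2+\|d_1\|_A^2$. The first term is $\omega_Z^2\|c\|_A^2$ by~\eqref{eq:sd2omega}, and $\|d_1\|_A^2=\|Z(I-Q_g)v_*\|_A^2=\|(I-Q_g)v_*\|_B^2=\omega_g^2\|v_*\|_B^2=\omega_g^2\|R_Z c\|_A^2=\omega_g^2(1-\omega_Z^2)\|c\|_A^2$, using $\|Zw\|_A=\|w\|_B$ and $\|R_Z c\|_A^2=(1-\omega_Z^2)\|c\|_A^2$, which gives the progress identity (with the factor $1-\omega_Z^2$ multiplying $\omega_g^2$). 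Finally, to see $\omega_g\le\omega_{\tilde z}$ I would use $\|\tilde z\|=\|g\|$ and $\|\tilde z\|_A=\|g\|_B$ to write $\omega_{\tilde z}^2=1-\|g\|^4/(\|g\|_B^2\|z\|_{A^{-1}}^2)$ and $\omega_g^2=1-\|g\|^4/(\|g\|_B^2\|g\|_{B^{-1}}^2)$; the claim then reduces to $\|g\|_{B^{-1}}^2=z^\trans Z(Z^\trans AZ)^{-1}Z^\trans z\le z^\trans A^{-1}z$, the Loewner inequality $Z(Z^\trans AZ)^{-1}Z^\trans\preceq A^{-1}$, which holds because $A^{1/2}Z(Z^\trans AZ)^{-1}Z^\trans A^{1/2}$ is an orthogonal projector and hence $\preceq I$.
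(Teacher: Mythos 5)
Your proof is correct and follows essentially the same route as the paper's: the same splitting $d=(I-R_Z)c+Z(v_*-v)$, the same identity $ZQ_gZ^\trans=R_{\tilde z}ZZ^\trans$ combined with $ZZ^\trans R_Z=R_Z$, the same $A$--orthogonal Pythagorean argument, and the same projector inequality $Z(Z^\trans A Z)^{-1}Z^\trans\preceq A^{-1}$ for $\omega_g\leq\omega_{\tilde z}$ (your final comparison should be against $\tilde z^\trans A^{-1}\tilde z$ rather than $z^\trans A^{-1}z$ to match the Kantorovich quotient defining $\omega_{\tilde z}$, an immaterial change since $Z^\trans z=Z^\trans\tilde z$). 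You also correctly obtain the factor $(1-\omega_Z^2)$ multiplying $\omega_g^2$, which is what the paper's own derivation yields as well; the $(1-\omega_Z)^2$ in the displayed statement is a typo.
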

\begin{proof}
If $v$ is the obtained (approximate) solution of~\eqref{eq:Bcor}, the progress of the step~\eqref{eq:sd2} is
\begin{equation} \label{eq:sd2b}
 \begin{split}
  d = x_*-x                 & =  c - Z v = (I-R_Z)c + Z(v_*-v), \\
  \|d\|_A^2 = \|x_*-x\|_A^2 & =  \|(I-R_Z)c\|_A^2 + \|Z(v_*-v)\|_A^2 = \omega_Z^2 \|c\|_A^2 + \|v_*-v\|_B^2,
 \end{split}
\end{equation}
where in the last line we use the $A$--orthogonality of the two terms.
The initial guess for $v$ is zero,  and after one step of the SD applied to~\eqref{eq:Bcor} the error is
$$
 v_*-v = (I-Q_g) (v_*-0) = (I-Q_g) B^{-1} g = (I-Q_g) (Z^\trans A Z)^{-1} Z^\trans z.
$$
The first line of the theorem now follows by the definition of $R_Z.$
To prove the second line it is enough to note that
$$
 Z Q_g Z^\trans = \frac{Zgg^\trans Z^\trans A Z Z^\trans}{g^\trans Z^\trans A Z g} = \frac{\tilde z \tilde z^\trans A Z Z^\trans}{\tilde z^\trans A \tilde z} = R_{\tilde z} Z Z^\trans,
$$
and $ZZ^\trans R_Z = R_Z.$
The progress of the inner SD step is $\|v_*-v\|_B = \omega_g \|v_*\|_B,$ where
\begin{equation} \nonumber
 \|v_*\|_B^2 = \|B^{-1}g\|_B^2
           = \|Z^\trans \tilde z\|_{B^{-1}}^2
           = \|Z^\trans z\|_{B^{-1}}^2
           = (z, Z(Z^\trans A Z)^{-1}Z^\trans z)
           = (c,R_Zc)_A
           = (1-\omega_Z^2) \|c\|_A^2.
\end{equation}
Substituting these estimates to~\eqref{eq:sd2b} we obtain the second claim of the theorem.

Now we prove that $\omega_g \leq \omega_{\tilde z}.$
Similarly to~\eqref{eq:sdomega} we have
$$
 \omega_g^2 = \frac{(v_*,(I-Q_g)v_*)_B}{(v_*,v_*)_B}
            = 1 - \frac{\|g\|^4}{(g,Bg)(g,B^{-1}g)}.
$$
Since $Z$ is orthogonal, $\|g\| = \|Z^\trans z\| = \|\tilde z\|.$
It also holds that $(g,Bg) = (Zg,AZg) = (\tilde z,A \tilde z).$
Finally we show that
$$
(g,B^{-1}g) = (\tilde z,Z (Z^\trans A Z)^{-1} Z^\trans \tilde z)
           = (A^{-1}\tilde z, R_z A^{-1} \tilde z)_A
            \leq \|A^{-1}\tilde z\|_A^2
            = (\tilde z, A^{-1} \tilde z),
$$
which completes the proof.
\end{proof}
The second term of~\eqref{eq:sd2b} can be written also as follows
\begin{equation} \nonumber
 \begin{split}
  Z(v_*-v) & =  Z(I-Q_g)v_*  = Z \left(I - \frac{g g^\trans B}{g^\trans B g} \right) B^{-1}g
             = Z B^{-1} Z^\trans \tilde z - \frac{\|g\|^2}{\|g\|_B^2} \tilde z
     \\ &    = Z B^{-1} Z^\trans z - \frac{\|\tilde z\|^2}{\|\tilde z\|_A^2} z
             = R_Z c - R_{\tilde z} c,
 \end{split}
\end{equation}
which gives $d=(I-R_{\tilde z})c.$
This shows that the combination of one outer and one inner SD step is equivalent to the SD step with perturbation~\eqref{eq:sd1}.
This is also easily seen from the structure of our inner--outer method itself.
Indeed, in the outer step we add components $Z^{(1)}$ to the basis set and in the inner step we add components of the inner residual $g=Z^\trans \tilde z = z_2,$ where $z_2$  contains the elements of $Z^{(2)}$ stretched into one vector.
Therefore, the described inner--outer scheme is equivalent to one `global' SD step.

The idea behind Theorem~\ref{thm:sd2} is of course not to prove a slightly worse estimate in a more complicated way.
In the recursive algorithm the second term in~\eqref{eq:sd2b} will be obtained by the SD step followed by further optimization which will decrease the error of the reduced problem and consequently the total error.
The SD step is therefore required as an initial guess for which we can provide a theoretical estimate of convergence.
The practical convergence that we expect is of course better than the upper estimate in~\eqref{eq:sd2a}.

\begin{remark}
 Regarding the spectrum of reduced problems, the following two--side inequality is proved in~\cite{kantorovich-1990}
 $$
 (U^\trans A U)^{-1} \leq U^\trans A^{-1} U \leq \frac{(\lmin+\lmax)^2}{4\lmin\lmax} (U^\trans A U)^{-1},
 $$
 where $U$ is unitary matrix and $B\geq C$ means that $B-C$ is positive definite.
 The last inequality used in Theorem \ref{thm:sd2} follows from the left  part of this inequality (which is itself rather elementary).
\end{remark}

\subsection{Greedy descent method}\label{sec:talsz}
\begin{algorithm}[t]
 \caption{$x = t + \als(z)$} \label{alg:talsz}
 \begin{algorithmic}[1]
  \REQUIRE System $Ax=y$ and initial guess $t$ in the TT--format~\eqref{eq:tt}, approximate residual $\tilde z=\tau(\bar Z) \in\T_\r.$
  \ENSURE Updated solution $x = t + v,$ $v=\tau(\bar V)\in\T_\r.$
  \FOR[Cycle over TT--cores]{$k=d,\ldots,1$}
    \STATE Find $V^{(k)} = \arg\min_{Z^{(k)}} J(t + \tau(Z^{(1)}, \ldots, Z^{(k-1)}, Z^{(k)}, V^{(k+1)}, \ldots, V^{(d)}))$
  \ENDFOR
  \RETURN $v =  \tau(V^{(1)},\ldots,V^{(d)})$
 \end{algorithmic}
\end{algorithm}
In higher dimensions we can further improve the steepest descent step by an ALS cycle over the step vector, as shown by Alg.~\ref{alg:talsz}.
This algorithm searches for $\max_{s\in\T_\r} J(t+s)$ using the ALS optimization and therefore can be considered as a \emph{greedy} algorithm.
The application of greedy algorithms to optimization in tensor formats was rigorously studied in~\cite{falconouy-2012,Binev-conv_rate_greedy-2011,BrisMaday-greedy_pdes-2009}.

Alg.~\ref{alg:talsz} starts from the SD step with perturbation, and then the energy function is additionally improved by an alternative minimization cycle.
The combined progress is therefore not worse than the one of the SD step, $\|d\|_A\leq\omega_{\tilde z}\|c\|_A,$   given by Thm.~\ref{thm:sd1}.
Another estimate is proven by the following theorem.

\begin{theorem}\label{thm:sdd}
 Consider the system $Ax=y$ with the initial guess $t$ and error $c=x_*-t.$
 The step described by Alg.~\ref{alg:talsz} returns the solution $x=t+v$ such that the error $d=x_*-x$ is bounded as follows
 \begin{equation}\label{eq:sdd}
  \begin{split}
   \|d\|_A^2 & \leq \nu_1^2 \biggl(  \omega_1^2 + (1-\omega_1^2)
                 \nu_2^2 \Bigl(  \omega_2^2 + (1-\omega_2^2)
                 \nu_3^2 \bigl( \omega_3^2 + \ldots
               + \nu_{d-1}^2 \omega_{d-1}^2 \bigr) \ldots \Bigr)\biggr) \|c\|_A^2
     \\ &    = \left( \sum_{k=1}^{d-1} \omega_k^2 \prod_{j=1}^{k-1}(1-\omega_j^2) \prod_{j=1}^k \nu_j^2 \right) \|c\|_A^2,
 \\
   \omega_k^2 & = \omega_{\Z_{\le k}}^2 =  1 -  \frac{( c, R_{\Z_{\le k}} c )_A}{(c,c)_A},
 \qquad
   \nu_k \leq 1.
  \end{split}
 \end{equation}
\end{theorem}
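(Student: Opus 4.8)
The plan is to establish~\eqref{eq:sdd} by induction on the dimension $d$, unfolding the inner--outer structure of Alg.~\ref{alg:talsz} one TT--core at a time and reducing every level to the two--dimensional analysis behind Thm.~\ref{thm:sd2}. I would introduce for each level $k$ the quantity $\mathcal{E}_k$, the squared $A$--norm error--reduction factor of the level--$k$ subproblem, and aim to prove the recurrence $\mathcal{E}_k = \nu_k^2\bigl(\omega_k^2 + (1-\omega_k^2)\,\mathcal{E}_{k+1}\bigr)$ with base case $\mathcal{E}_{d-1}=\nu_{d-1}^2\omega_{d-1}^2$; unwinding it from $k=1$ down to $k=d-1$ then yields both the nested bracket expression and the closed product form of~\eqref{eq:sdd}, since each summand accumulates one factor $\omega_k^2$, the Pythagorean product $\prod_{j<k}(1-\omega_j^2)$, and the improvement product $\prod_{j\le k}\nu_j^2$.

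First I would isolate the outermost level. Enlarging the reduced basis by the first residual core $Z^{(1)}$ (extended by the identity on the remaining modes) is exactly the Galerkin step of Sec.~\ref{sec:gal}, so the $A$--orthogonal splitting used in~\eqref{eq:sd2b} gives
\begin{equation}\nonumber
 \|d\|_A^2 = \omega_1^2\|c\|_A^2 + \|v_*-v\|_B^2, \qquad \|v_*\|_B^2 = (1-\omega_1^2)\|c\|_A^2,
\end{equation}
with $B=\Z_{\le1}^\trans A\Z_{\le1}$, $g=\Z_{\le1}^\trans z$, $v_*=B^{-1}g$, and $\omega_1=\omega_{\Z_{\le1}}$ directly from the definition~\eqref{eq:sd2omega}. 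The structural point is that $B$ is again SPD and that $Bv=g$ is a system of the same TT type in one fewer dimension: the core $Z^{(1)}$ has been frozen into the basis and the rank index $r_1$ becomes the new boundary, while Alg.~\ref{alg:talsz} keeps optimizing the cores over $\T_\r$. Hence the inner problem is an instance of the same algorithm with zero initial guess and initial error $v_*$, and the inductive hypothesis supplies $\|v_*-v\|_B^2\le\mathcal{E}_2\|v_*\|_B^2$; substituting and attaching the improvement factor $\nu_1^2$ from the final re--optimization of the outer core closes the recurrence at level one.

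The heart of the argument is to express every progress factor through projections of the \emph{original} error $c$ onto the nested subspaces $\Span\Z_{\le k}$ (which, one checks, satisfy $\Span\Z_{\le1}\supseteq\Span\Z_{\le2}\supseteq\cdots$), so that $\omega_k=\omega_{\Z_{\le k}}$ as asserted. This requires the push--back identity that generalizes the two--dimensional computation $ZQ_gZ^\trans=R_{\tilde z}ZZ^\trans$ and $(g,B^{-1}g)=(A^{-1}\tilde z,R_zA^{-1}\tilde z)_A$ from the proof of Thm.~\ref{thm:sd2}: the $B$--orthogonal projector of the level--$(k{+}1)$ step, transported back through $\Z_{\le k}$, must coincide with the $A$--orthogonal projector attached to the next nested subspace $\Z_{\le k+1}$, and the chain of reduced matrices $B_k=\Z_{\le k}^\trans A\Z_{\le k}$ must be compatible so that the inner $B_k$--norm of the exact inner solution reproduces $(c,R_{\Z_{\le k}}c)_A$. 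Establishing this compatibility across all levels, while keeping each splitting $\|\cdot\|_A^2=\omega_k^2\|\cdot\|^2+\|\cdot\|^2$ genuinely Pythagorean, is the main obstacle; everything else is bookkeeping.

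Finally, the improvement factors $\nu_k\le1$ enter harmlessly. At each level the full ALS minimization of Alg.~\ref{alg:talsz} can only lower the energy below the value attained by the single SD step that the estimate relies on, so the realized level--$k$ error is some fraction $\nu_k^2\le1$ of the SD bound; the worst case $\nu_k=1$ recovers the pure steepest--descent recursion and shows the estimate is never vacuous, while $\omega_k\le\omega_{\tilde z}<1$ (established exactly as $\omega_g\le\omega_{\tilde z}$ in Thm.~\ref{thm:sd2}) guarantees geometric decay. Assembling the per--level recurrence and telescoping then gives~\eqref{eq:sdd}.
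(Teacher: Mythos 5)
Your proposal matches the paper's own proof in both structure and substance: induction on $d$ with the two--dimensional case of Thm.~\ref{thm:sd2} as the base, the Pythagorean splitting $\|d\|_A^2=\omega_1^2\|c\|_A^2+\|v_*-v\|_B^2$ at the outer level, recognition of the inner problem $Bv=g$ as the same algorithm one dimension down sharing the TT--cores of $\tilde z$, and the push--back identity (via $Z\G_{\le k}=\Z_{\le k+1}$) that identifies the inner projectors with $R_{\Z_{\le k+1}}$ acting on the original error $c$. The step you flag as the main obstacle is exactly the computation $(v_*,Q_{\G_{\le k}}v_*)_B=(c,R_{\Z_{\le k+1}}c)_A$ carried out in the paper, so nothing essential is missing.
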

\begin{proof}
In 2D the statement of the theorem reads $\|d\|_A^2 \leq \nu_1^2 \omega_1^2 \|c\|_A^2.$
It is easy to see that the ALS update over $Z^{(2)}$ gives exactly the two--dimensional SD step~\eqref{eq:sd2} with the progress  $\omega_Z = \omega_{\Z_1} = \omega_1$ given by~\eqref{eq:sd2omega}.
The ALS update over $Z^{(1)}$ further improves the energy function by the factor $\nu_1^2 \leq 1,$ which proves the statement of the theorem for $d=2.$
The base of the recursion is proved.

After a \emph{microstep} when $Z^{(k+1)}$ is optimized and becomes $V^{(k+1)}$, the solution writes as follows
\begin{equation}\nonumber
 x_k=t+\Z_{\leq k} v_{>k}, \qquad
      \Z_{\leq k} \in \C^{n_1\ldots n_d \times r_k n_{k+1}\ldots n_d}, \quad
      v_{> k} \in \C^{r_k n_{k+1}\ldots n_d},
\end{equation}
where $v_{>k}=\tau(V^{(k+1)},\ldots,V^{(d)}),$ i.e. $v_{>k}(\overline{\alpha_{k} j_{k+1}\ldots j_d}) = V^{(k+1)}_{\alpha_k \alpha_{k+1}}(j_{k+1})\ldots V^{(d)}_{\alpha_{d-1}}(j_d),$ and
\begin{equation}\label{eq:zproj}
 \begin{split}
  \Z_{\leq k} = \P_{\leq k}(\bar Z) & = Z^{\leq k} \otimes I_{n_{k+1}} \otimes \ldots \otimes I_{n_d}, \\
  \Z_{\leq k}(\overline{i_1 \ldots i_d},\overline{\alpha_k j_{k+1} \ldots j_d})
           & = Z^{(1)}_{\alpha_1}(i_1) Z^{(2)}_{\alpha_1\alpha_2}(i_2) \ldots Z^{(k)}_{\alpha_{k-1}\alpha_k}(i_k) \delta(i_{k+1},j_{k+1}) \ldots \delta(i_d,j_d).
 \end{split}
\end{equation}
This equation is similar to the two--dimensional SD step~\eqref{eq:sd2} and allows to estimate the progress of Alg.~\ref{alg:talsz} using the result of Thm.~\ref{thm:sd2} recursively.
Following~\eqref{eq:sd2b}, the progress can be written as follows
\begin{equation}\label{eq:sdrec}
 \frac{\|x_*-x_k\|_A^2}{\|x_*-t\|_A^2} = \left( \omega_k^2 + (1-\omega_k^2) \frac{\|v_{>k,*}-v_{>k}\|_{A_k}^2}{\|v_{>k,*}-0\|_{A_k}^2} \right),
\end{equation}
where $A_k = \Z_{\leq k}^\trans A \Z_{\leq k},$ $z_k=\Z_{\leq k}^\trans \tilde z$ and $v_{>k,*}$ is the exact solution of the reduced problem $A_k v_{>k} = z_k.$
Note that $z_k=\Z_{\leq k}^\trans \tau(Z^{(1)},\ldots,Z^{(d)}) =\tau(Z^{(k+1)},\ldots,Z^{(d)})$, so the inner SD steps will share the TT--factors of the same residual $\tilde z.$

To prove the recursion step, assume that the theorem holds in the dimension $d-1,$ write~\eqref{eq:sdrec} with $k=1$ and apply~\eqref{eq:sdd} for the second term as follows
\begin{equation}\nonumber
 \frac{\|v_*-v\|_B^2}{\|v_*-0\|_B^2} \le \left( \sum_{k=1}^{d-2} \hat\omega_k^2 \prod_{j=1}^{k-1}(1-\hat\omega_j^2) \prod_{j=1}^k \hat\nu_j^2 \right),
\quad
 \hat\omega_k^2 = 1- \frac{(v_*,Q_{\G_{\leq k}}v_*)_B}{(v_*,v_*)_B},
\end{equation}
where $B=Z^\trans A Z,$ $g=Z^\trans z,$ $Z=\Z_1=Z^{(1)}\otimes I \otimes \ldots \otimes I,$ $v_*$ is the exact solution of $Bv=g,$  $Q_{\G_{\leq k}}$ is the $B$--orthogonal projector on $\G_{\leq k}$ and $\G_{\leq k}=\P_{\leq k}(\bar G)$ is defined for $\tau(\bar G) = g$ similarly to~\eqref{eq:zproj}.
Since $Z \G_{\leq k} = \Z_{\leq k+1}$, and $\|v_*\|_B=\|c\|_A$ we have
$$
(v_*, Q_{\G_{\leq k}} v_*)_B = (z,Z \G_{\leq k} (\G_{\leq k}^\trans B \G_{\leq k})^{-1} \G_{\leq k}^\trans Z^\trans z) = (c, R_{\Z_{\geq k+1}} c)_A,
$$
and  $\hat\omega_k = \omega_{k+1}.$
Similarly $\nu_{k+1} = \hat\nu_k$ now defines the progress of the ALS microstep over the components of $G^{(k)}=Z^{(k+1)}.$
Updating $Z^{(1)}$ by the ALS step we reduce the error by the factor $\nu_1$ and write the total progress as follows
\begin{equation}\nonumber
\frac{\|x_*-x\|_A^2}{\|x_*-t\|_A^2} \le \nu_1^2\left( \omega_1^2 + (1-\omega_1^2) \sum_{k=2}^d \omega_k^2 \prod_{j=2}^{k-1}(1-\omega_j^2) \prod_{j=2}^k \nu_j^2 \right),
\end{equation}
which completes the proof.
\end{proof}

\begin{remark} \label{rem:d-1}
 Under the conditions of the theorem it holds $\|d\|_A\leq\omega_{d-1}\|c\|_A.$
 Indeed, after the first ALS microstep the solution has the form $x_{d-1}=t+\Z_{\leq d-1}v_d,$ see~\eqref{eq:zproj}.
 Comparing this to the steepest descent in 2D~\eqref{eq:sd2} we follow~\eqref{eq:sd2omega} and claim the convergence rate $\omega_{d-1}^2$ for $x_{d-1}$ and consequently for the result of Alg.~\ref{alg:talsz} due to the monotone convergence of the ALS.
\end{remark}

\begin{remark}\label{rem:nu=1}
If ALS steps occasionally give no progress, i.e. $\nu_k=1,$ the progress $\omega$ of Alg.~\ref{alg:talsz} given by~\eqref{eq:sdd} satisfies
$$
1-\omega^2 = (1-\omega_1^2) \ldots (1-\omega_{d-1}^2) = \prod_{k=1}^{d-1} (1-\omega_k^2) \leq 1-\omega_{d-1}^2.
$$
It follows that in this case $\omega^2\geq\omega_{d-1}^2$, and the convergence estimate given by the previous remark is better than the one given by the theorem.
If a sensible estimates for $\nu_k$ are available, we can plug them in~\eqref{eq:sdd} to estimate the combined progress of the SD and ALS steps.
\end{remark}

\subsection{Non-greedy combination of the steepest descent and ALS}\label{sec:alstz}
Alg.~\ref{alg:talsz} is a greedy--type algorithm.
Such algorithms are likely to have a slow convergence or stagnate at some error level.
To improve the practical convergence we can apply the ALS optimization to the whole solution vector $x=t+z\alpha,$ as shown by Alg.~\ref{alg:alstz}.

Just like Alg.~\ref{alg:talsz}, the non-greedy Alg.~\ref{alg:alstz} starts from the steepest descent step and then improves the energy function by a number of ALS updates.
Therefore, the progress of Alg.~\ref{alg:alstz} is estimated by the one of the SD algorithm, $\|d\|_A \leq \omega_{\tilde z}\|c\|_A.$
The better estimate of Remark~\ref{rem:d-1} also applies to Alg.~\ref{alg:alstz}, i.e. $\|d\|_A \leq \omega_{d-1}\|c\|_A.$
This follows from the fact that the optimization over $X^{(d)}$ gives better energy function than the optimization over the lower part of this TT--block $V^{(d)}$, performed in greedy Alg.~\ref{alg:talsz}.
However, we cannot generalize the result of Thm.~\ref{thm:sdd} for Alg.~\ref{alg:alstz}, since the non-greedy ALS update destroys the $\bar T + \bar Z$ structure of the interfaces.
The practically observed convergence of this method is nevertheless much better than that of the greedy descent method.
More rigorous analysis of the convergence of ALS schemes can probably provide much better estimates for the convergence rate of the proposed algorithm.

In the sequel we will develop a version of the algorithm which mixes the ALS and SD steps, following~\eqref{eq:Bsys}, cf. line `AMEn' in Table~\ref{tab:dmrg}.
For this algorithm it is possible to analyze the convergence recurrently similarly to Theorem~\eqref{eq:sdd}.
The mixed AMEn version also has better convergence properties for the practical problems considered in~\cite{dkh-cme-2012}.

\begin{algorithm}[t]
 \caption{$x = \als(t+z)$} \label{alg:alstz}
 \begin{algorithmic}[1]
  \STATE Set $\bar X = (X^{(1)},\ldots,X^{(d)}) = \bar T + \bar Z$
  \FOR[Cycle over TT--cores]{$k=d,\ldots,1$}
    \STATE Find $X^{(k)}_\new = \arg\min_{X^{(k)}} J(\tau(X^{(1)},\ldots,X^{(k)},X^{(k+1)}_\new,\ldots,X^{(d)}_\new))$
  \ENDFOR
  \RETURN $x = \tau(X^{(1)}_\new,\ldots,X^{(d)}_\new)$
 \end{algorithmic}
\end{algorithm}

%\newpage%%%%%%%%%%%%%%%%%%%%%%%%%
\section{Practical implementation of tensor truncations} \label{sec:prac}
Throughout the paper, we considered vectors, perturbed due to the tensor approximation.
Now we highlight the practical features of this operation.

The TT--rounding procedure \cite{osel-tt-2011} performs the recursive SVD-based truncations, which reduce the TT--ranks.
The truncation of the $k$-th unfolding writes as follows,
$$
X^{\{k\}}(\overline{i_1 \ldots i_k}, \overline{i_{k+1} \ldots i_d}) = U(\overline{i_1 \ldots i_k},\alpha) \sigma(\alpha) V^{\trans}(\alpha, \overline{i_{k+1} \ldots i_d}),
$$
where matrices $U$ and $V$ are orthogonal.
The approximation algorithm returns
$$
\tilde X^{\{k\}} = \tilde U \tilde U^{\trans} X^{\{k\}}, \quad \delta X^{\{k\}} = (I-\tilde U \tilde U^{\trans}) X^{\{k\}},
$$
where $\tilde U$ contains the $r$ first (dominant) vectors of $U$.
It follows by the construction of the TT--SVD algorithm that $(\tilde X^{\{k\}})^\trans (\delta X^{\{k\}}) = 0$, and therefore
$(\tilde x, \delta x) = (\tau(\tilde X), \tau(\delta X)) = 0$.
We rely on this property for the residual approximation \eqref{eq:pert} in the accuracy analysis of the perturbed steepest descent method, see Theorem \ref{thm:sd1}.
The block version of the same orthogonality condition is used in the derivation of the two-dimensional steepest descent progress \eqref{eq:sd2omega}.

The SVD algorithm truncates a vector in the Frobenius norm, i.e. chooses the approximation rank considering a sum of squared smallest singular values.
To satisfy the accuracy assumption in \eqref{eq:pert} we need to perform the accuracy control in the $A$-norms, $||\delta z||_A \le \eps ||z||_A$.
An optimal approximation in the $A$-norms is a difficult problem.
We can either truncate in the Frobenius norm and rely on the norm equivalence $||x|| \lambda_{\min}^{1/2} \le ||x||_A \le ||x|| \lambda_{\max}^{1/2}$,
or follow the cheap heuristic strategy proposed in \cite{DoOs-dmrg-solve-2011}.
In the inner steps of the TT-rounding procedure, after the SVD is computed, we throw away the smallest singular values one by one, while the \emph{local} error/residual is below the tolerance, i.e.
 $$
 \begin{array}{rcl}
 ||X^{(k)}-U\Sigma V^{\trans}||_{\P_{\neq k}^{\trans} A \P_{\neq k}}  & \le & \eps ||X^{(k)}||_{\P_{\neq k}^{\trans} A \P_{\neq k}}, \quad \mbox{or} \\
 ||\P_{\neq k}^{\trans} A \P_{\neq k}(X^{(k)}-U\Sigma V^{\trans})|| & \le & \eps ||\P_{\neq k}^{\trans} A \P_{\neq k}~X^{(k)}||.
 \end{array}
 $$

The basis enrichment step developed in our paper can only increase the TT--ranks of the solution.
To make the procedure computationally feasible, we need to introduce a truncation step, which will reduce the solution ranks.
To do this, we apply the TT--rounding procedure between the iterations, which perturbs the solution and can increase the energy function.
Therefore, the truncation accuracy has to be chosen accurately to provide the convergence of the methods with approximation.

Assume that a step of the proposed method has the following progress,
$$
\|x_*-x\|_A \le \Omega \|x_*-t\|_A.
$$
The progress after the approximation $\|x - \tilde x\|_A \le \eps_x \|x\|_A$ reads
$$
\|x_*-\tilde x\|_A = \|x_*-x + x-\tilde x\|_A \le \Omega \|x_*-t\|_A + \eps_x \|x\|_A.
$$
While the energy function is large, the first term dominates for sufficiently small $\eps_x$.
In the end of the process, the perturbation error is comparable to the progress of the method, and the algorithm stagnates.
We will see this in numerical examples.

%%%%%%%%%%%%%%%%%%%%%
\section{Numerical experiments} \label{sec:num}
Let us verify the methods proposed on a model example of symmetric positive definite system:
$$
-\Delta x = e, \qquad x \in \Omega = [0:1]^d, \qquad \left. x\right|_{\partial\Omega}=0,
$$
where $\Delta$ is the standard finite difference Laplacian discretization on a uniform grid with the mode size $64$ in each direction, i.e., the linear system has $64^d$ unknowns.
The right--hand side $e$ is the vector of all ones.
Such a system arises naturally in the heat transfer simulation, or to precondition more complex elliptic problems.
Note that the matrix and the right--hand side have exact low--rank representations, see~\cite{khkaz-lap-2010,osel-constr-2010}.

For different $d$ we compare the following methods in Fig.~\ref{fig:lp}:
\begin{itemize}
\item the DMRG method presented in \cite{DoOs-dmrg-solve-2011} (``dmrg'');
\item the 2D SD method \eqref{eq:sd2} in a form $x=t+\Z_{\le d-1} v_d$ (``$x=t+Zv$'');
\item the greedy algorithm \ref{alg:talsz} (``$x=t+\als(z)$'');
\item the non-greedy algorithm \ref{alg:alstz} (``$x=\als(t+z)$'');
\item wherever possible, the standard (vectorized) steepest descend (``sd'').
\end{itemize}
The TT--rank of the enrichment $\tilde z$ was chosen $\rho=5$, and the solution after each step was approximated with the relative truncation tolerance $\eps_x = 10^{-4}$ in the Frobenius norm.

\begin{figure}[p]
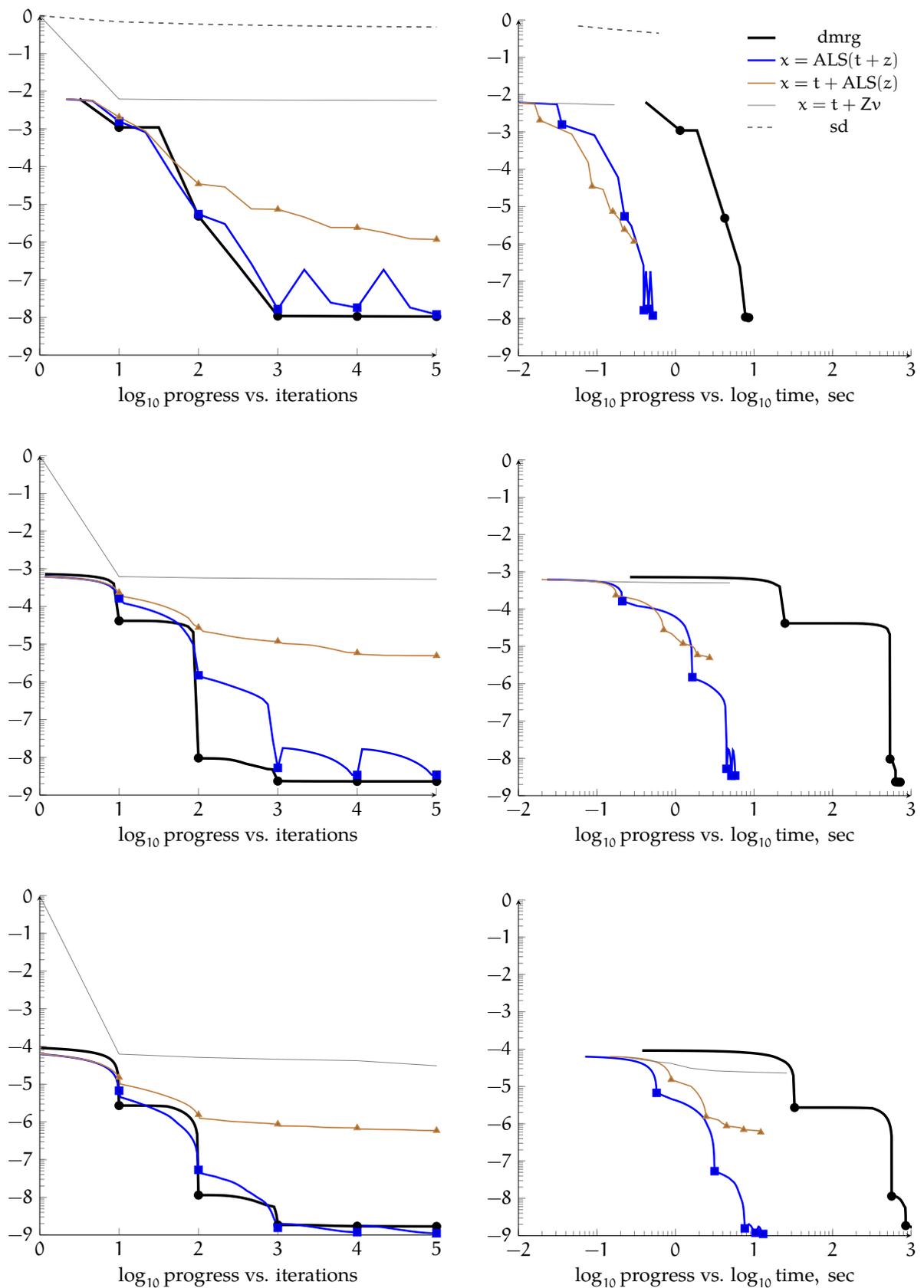

 \begin{center} \hfil  \def\nnn{3}
  \resizebox{.48\textwidth}{!}{\input{./Pic/pgfart.sty} \begin{tikzpicture}

\begin{axis}[%
xmode=normal,ymode=log,
cycle list name=amen,
xlabel={$\log_{10}\mathrm{progress}$ vs. iterations},
xmin=0, xmax=5,
ymin=1e-09, ymax=1,
yminorticks=true,
legend style={at={(.5,-.3)},anchor=north}]

\foreach \m in {dmrg,alsx,xalsv,xv,sd}
  {
  \addplot+[no marks]   table[header=false, x index = 0, y index = 2]{./box/dat/conv_lp\nnn_\m.dat};
  }

\foreach \m in {dmrg,alsx,xalsv}
  {
  \addplot+[only marks] table[header=false, x index = 0, y index = 2]{./box/dat/conv_lp\nnn_\m.int};
  }

\end{axis}
\end{tikzpicture}%} \hfil
  \resizebox{.48\textwidth}{!}{\input{./Pic/pgfart.sty} \def\lll{1} \begin{tikzpicture}

\begin{axis}[%
xmode=log,ymode=log,
cycle list name=amen,
xlabel={$\log_{10}\mathrm{progress}$ vs. $\log_{10}\mathrm{time,~sec}$ },
xmin=1e-2, xmax=1e3,
xminorticks=true,
ymin=1e-09, ymax=1e0,
yminorticks=true,
legend style={at={(.99,.99)},anchor=north east}
]

\foreach \m in {dmrg,alsx,xalsv,xv,sd}
  {
  \addplot+[no marks]   table[header=false, x index = 1, y index = 2]{./box/dat/conv_lp\nnn_\m.dat};
  }

\foreach \m in {dmrg,alsx,xalsv}
  {
  \addplot+[only marks] table[header=false, x index = 1, y index = 2]{./box/dat/conv_lp\nnn_\m.int};
  }

\ifnum\lll>0
 \legend{dmrg,$x=\als(t+z)$,$x=t+\als(z)$,$x=t+Zv$,sd}
\fi

\end{axis}
\end{tikzpicture}%} \hfil
 \end{center}
 \begin{center} \hfil  \def\nnn{16}
  \resizebox{.48\textwidth}{!}{\input{./Pic/pgfart.sty} \begin{tikzpicture}

\begin{axis}[%
xmode=normal,ymode=log,
cycle list name=amen,
xlabel={$\log_{10}\mathrm{progress}$ vs. iterations},
xmin=0, xmax=5,
ymin=1e-09, ymax=1,
yminorticks=true,
legend style={at={(.5,-.3)},anchor=north}]

\foreach \m in {dmrg,alsx,xalsv,xv,sd}
  {
  \addplot+[no marks]   table[header=false, x index = 0, y index = 2]{./box/dat/conv_lp\nnn_\m.dat};
  }

\foreach \m in {dmrg,alsx,xalsv}
  {
  \addplot+[only marks] table[header=false, x index = 0, y index = 2]{./box/dat/conv_lp\nnn_\m.int};
  }

\end{axis}
\end{tikzpicture}%} \hfil
  \resizebox{.48\textwidth}{!}{\input{./Pic/pgfart.sty} \def\lll{0} \begin{tikzpicture}

\begin{axis}[%
xmode=log,ymode=log,
cycle list name=amen,
xlabel={$\log_{10}\mathrm{progress}$ vs. $\log_{10}\mathrm{time,~sec}$ },
xmin=1e-2, xmax=1e3,
xminorticks=true,
ymin=1e-09, ymax=1e0,
yminorticks=true,
legend style={at={(.99,.99)},anchor=north east}
]

\foreach \m in {dmrg,alsx,xalsv,xv,sd}
  {
  \addplot+[no marks]   table[header=false, x index = 1, y index = 2]{./box/dat/conv_lp\nnn_\m.dat};
  }

\foreach \m in {dmrg,alsx,xalsv}
  {
  \addplot+[only marks] table[header=false, x index = 1, y index = 2]{./box/dat/conv_lp\nnn_\m.int};
  }

\ifnum\lll>0
 \legend{dmrg,$x=\als(t+z)$,$x=t+\als(z)$,$x=t+Zv$,sd}
\fi

\end{axis}
\end{tikzpicture}%} \hfil
 \end{center}
 \begin{center} \hfil  \def\nnn{64}
  \resizebox{.48\textwidth}{!}{\input{./Pic/pgfart.sty} \begin{tikzpicture}

\begin{axis}[%
xmode=normal,ymode=log,
cycle list name=amen,
xlabel={$\log_{10}\mathrm{progress}$ vs. iterations},
xmin=0, xmax=5,
ymin=1e-09, ymax=1,
yminorticks=true,
legend style={at={(.5,-.3)},anchor=north}]

\foreach \m in {dmrg,alsx,xalsv,xv,sd}
  {
  \addplot+[no marks]   table[header=false, x index = 0, y index = 2]{./box/dat/conv_lp\nnn_\m.dat};
  }

\foreach \m in {dmrg,alsx,xalsv}
  {
  \addplot+[only marks] table[header=false, x index = 0, y index = 2]{./box/dat/conv_lp\nnn_\m.int};
  }

\end{axis}
\end{tikzpicture}%} \hfil
  \resizebox{.48\textwidth}{!}{\input{./Pic/pgfart.sty} \def\lll{0} \begin{tikzpicture}

\begin{axis}[%
xmode=log,ymode=log,
cycle list name=amen,
xlabel={$\log_{10}\mathrm{progress}$ vs. $\log_{10}\mathrm{time,~sec}$ },
xmin=1e-2, xmax=1e3,
xminorticks=true,
ymin=1e-09, ymax=1e0,
yminorticks=true,
legend style={at={(.99,.99)},anchor=north east}
]

\foreach \m in {dmrg,alsx,xalsv,xv,sd}
  {
  \addplot+[no marks]   table[header=false, x index = 1, y index = 2]{./box/dat/conv_lp\nnn_\m.dat};
  }

\foreach \m in {dmrg,alsx,xalsv}
  {
  \addplot+[only marks] table[header=false, x index = 1, y index = 2]{./box/dat/conv_lp\nnn_\m.int};
  }

\ifnum\lll>0
 \legend{dmrg,$x=\als(t+z)$,$x=t+\als(z)$,$x=t+Zv$,sd}
\fi

\end{axis}
\end{tikzpicture}%} \hfil
 \end{center}
\caption{$A$--norm of the error in different methods versus iterations (left), and CPU time (right).
        Dimension of the problem is $d=3$ (top), $d=16$ (middle), $d=64$ (bottom).}
\label{fig:lp}
\end{figure}

The convergence of the considered methods is compared in Fig.~\ref{fig:lp}.
A one--dimensional sweep is considered as one iteration, the progress of micro-iterations is also shown whenever possible.
We can make the following remarks based on the experimental results.
\begin{itemize}
\item ALS steps sufficiently improves the convergence of all considered methods, i.e. the pessimistic assumptions of Remark~\ref{rem:nu=1} do not hold.
A refined analysis of ALS convergence rates $\nu_k$ is still an open question.
\item The convergence of non-greedy Alg.~\ref{alg:alstz} is comparable to the one of the DMRG iteration-wise.
However, the complexity of each DMRG iteration is cubic in the mode size, while the proposed methods have linear complexity.
This is clearly demonstrated in the right column,  where the convergence is shown w.r.t. the computational time.
The proposed methods  time--wise are up to $100$ times faster than the DMRG for this problem.
\item The one-step steepest descend method shows the slowest convergence, which is a direct consequence of the narrow (one vector)  direction subspace.
 This indicates that the upper bounds of the convergence rate established in the  paper might be seriously overestimated.
\end{itemize}

%\newpage%%%%%%%%%%%%%%%%%%%%%%%%%%%%%%
\section{Conclusion and future work} \label{sec:final}
In this paper we equip the ALS scheme with a basis enrichment step, which is chosen in accordance with the steepest descent algorithm.
The resulted method demonstrates the convergence almost as good as the one of DMRG, while has the linear in the mode size and dimension complexity of ALS.
Moreover, the global convergence rate is established similarly to the one of the steepest descent.
Up to the best of our knowledge, this is the first result on the global convergence of a numerically efficient solution method for linear systems in higher dimensions.
The proposed algorithm combines the advances of optimization methods in tensor formats (ALS, DMRG) with the ones of classical methods of numerical analysis.

The proposed family of methods includes the algorithm with greedy--type step, for which the theoretical results obtained in the framework of greedy algorithms can be applied.
However, other algorithms developed in the non-greedy style also have proven convergence rate and manifest much better convergence in numerical experiments.

The results of this paper can be developed in the following directions.
First, the analysis for the non--symmetric systems can be made similarly to this paper, substituting the steepest descent algorithm by the minimal residual method.
The second Krylov vector is required in MINRES--type algorithms, which have to be approximated and the convergence of perturbed method should be discussed similarly to the Theorem~\ref{thm:sd1}.
Second, the complexity of the proposed methods w.r.t. tensor ranks should be studied and improved using faster (eg, cross) approximation schemes.
Finally, we will develop and analyze the AMEn method for which the enrichment steps are mixed with ALS optimization, i.e., there is no explicit steepest descent step.

The proposed algorithms are already applied to the solution of the chemical master equation in dimensions up to twenty~\cite{dkh-cme-2012}, and more practical applications will follow soon.

\newpage%%%%%%%%%%%%%%%%%%%%%%%% BIB
%\bibliographystyle{siam}
%\bibliography{our,tensor,dmrg,misc,iter,algebra}

\end{document}